\newcommand{\Aut}{\ensuremath{\operatorname{Aut}}}
\newcommand{\id}{\ensuremath{\text{\rm id}}}
 \definecolor{cupgreen}{rgb}{0,0.498,0.208}
  \definecolor{cupblue}{rgb}{0,0,.5}
  \definecolor{cupred}{rgb}{1,0.04,0}
  \definecolor{cuppink}{rgb}{0.925,0,0.545}
  \definecolor{cupmagenta}{rgb}{0.624,0.161,0.424}
  \definecolor{cupbrown}{rgb}{0.71,0.212,0.133}
  \definecolor{cupgreen}{rgb}{0,0,0}
  \definecolor{cupblue}{rgb}{0,0,0}
  \definecolor{cupred}{rgb}{0,0,0}
  \definecolor{cuppink}{rgb}{0,0,0}
  \definecolor{cupmagenta}{rgb}{0,0,0}
  \definecolor{cupbrown}{rgb}{0,0,0}
\definecolor{TITLE}{rgb}{0,0,0}
\definecolor{AUTHOR1}{rgb}{0.00,0.59,0.00}
\definecolor{AUTHOR2}{rgb}{0.50,0.00,1.00}
\definecolor{SECTION}{rgb}{0.50,0.00,1.00}
\definecolor{THM}{rgb}{0.8,0,0.1}
\definecolor{SEC}{rgb}{0,0,1}
\newtheorem{theorem}{{\color{THM} Theorem}}[section]
\newtheorem{procedure}{{\color{THM} Procedure}}[section]
\newtheorem{lemma}[theorem]{{\color{THM}Lemma}}
\newtheorem{proposition}[theorem]{{\color{THM}Proposition}}
\newtheorem{corollary}[theorem]{{\color{THM}Corollary}}
\theoremstyle{definition}
\newtheorem{example}[theorem]{{\color{THM}Example}}
\newtheorem{remark}[theorem]{{\color{THM}Remark}}
\numberwithin{equation}{section}
\title{The Cost of Edge-distinguishing of the Cartesian Product of Connected Graphs \thanks{The research was partially supported by OEAD grant no. PL 08/2017. }}
\author{Aleksandra Gorzkowska\thanks{Aleksandra Gorzkowska was partially supported by the Polish Ministry of Science and Higher Education.}\\
\small AGH University, Department of Discrete Mathematics, 30-059 Krakow, Poland\\
\small\tt agorzkow@agh.edu.pl
\and Mohammad Hadi Shekarriz\\
\small Department of Mathematics, Shiraz University, Shiraz, Iran.\\
\small\tt mshekarriz@shirazu.ac.ir\\
\date {}}
\begin{document}

\maketitle

\begin{abstract}
 A graph $G$ is said to be $d$-distinguishable if there is a vertex coloring of $G$ with a set of $d$ colors which breaks all of the automorphisms of $G$ but the identity. We call the minimum $d$ for which a graph $G$ is $d$-distinguishiable the distinguishing number of $G$, denoted by $D(G)$. When $D(G)=2$, the minimum number of vertices in one of the color classes is called the cost of distinguishing of $G$ and is shown by $\rho(G)$. In this paper, we generalize this concept to edge-coloring by introducing the cost of edge-distinguishing of a graph $G$, denoted by $\rho'(G)$. Then, we consider $\rho'(K_n )$ for $n\geq 6$ by finding a procedure that gives the minimum number of edges of $K_n$ that should be colored differently to have a $2$-distinguishing edge-coloring. Afterwards, we develop a machinery to state a sufficient condition for a coloring of the Cartesian product to break all non-trivial automorphisms. Using this sufficient condition, we determine when cost of distinguishing and edge-distinguishing of the Cartesian power of a path equals to one. We also show that this parameters are equal to one for any Cartesian product of finitely many paths of different lengths. Moreover, we do a similar work for the Cartesian powers of a cycle and also for the Cartesian products of finitely many cycles of different orders. Upper bounds for the cost of edge-distinguishing of hypercubes and the Cartesian powers of complete graphs are also presented.
\end{abstract}

\section{Introduction}

We follow standard graph theory notation. A (vertex, edge or total) coloring of a graph is called \emph{distinguishing} if no automorphism but the identity preserves it. The {\it distinguishing number} of a graph $G$, denoted $D(G)$, is the smallest number $d$ such that there exists a distinguishing vertex coloring of $G$ with $d$ colors. The graph $G$ is called {\it $d$-distinguishable} if there exists a distinguishing vertex coloring with $d$ colors. Already defined by Babai in 1977 under the names \emph{asymmetric coloring} and \emph{asymmetric coloring number} \cite{Babai}, these concepts were reintroduced for vertex coloring by Albertson and Collins in 1996 \cite{ac}. Since then other variants have been studied, in particular, for edge coloring by Kalinowski and Pil\'sniak \cite{kp} and for total coloring by Kalinowski, Pil\'sniak and Wo\'zniak \cite{kpw}. The {\it distinguishing index} of a graph $G$, denoted $D'(G)$, is the least number $d$ such that there exists a distinguishing edge coloring of $G$ with $d$ colors. The {\it total distinguishing number} of a graph $G$, denoted $D''(G)$, is the least number $d$ such that there exists a total distinguishing coloring of $G$ with $d$ colors.

Hitherto, it has been proved that many classes of graphs have the distinguishing number equal to two. Among others, for paths and cycles of sufficiently large order, we have  $D(P_n) = 2$ for $n \geq 3$, and $D(C_n) = 2$ for $n \geq 6$ \cite{ac}. Furthermore, Bogstad and Cowen in \cite{bc} showed that for $k \geq 4$, every hypercube $Q_k$ of dimension $k$, which is the Cartesian product of $k$ copies of $K_2$, is $2$-distinguishable. It has also been shown by Imrich and Klav{\v{z}}ar in \cite{ik} that the distinguishing number of Cartesian powers of a connected graph $G$ is equal to two except for $K_2^2, K_3^2, K_2^3$. Moreover, it is proved by Estaji et al. in \cite{eikpt} that for every pair of connected graphs $G$ and $H$ with $|H| \leq |G| < 2^{|H|} - |H|$, we have $D(G \Box H) \leq 2$.

For families of $2$-distinguishable graphs we might attempt to minimize the number of vertices in one of the two color classes; the minimum number of vertices in the smaller color class of a distinguishing $2$-coloring of a graph $G$ is called {\it the cost of distinguishing} of $G$, and is denoted by $\rho (G)$. The idea was first proposed by Imrich, while hypercubes were the first class of $2$-distinguishable graphs whose cost of distinguishing was studied. Boutin proved that $\lceil \log_2 n \rceil +1 \leq \rho (Q_n) \leq 2 \lceil \log_2 n \rceil -1$ for $n \geq 5$ \cite{b}.



For edge colorings (as well as for total colorings), large classes of graphs have been found in \cite{gkp, kp, p} for which the  distinguishing indices are equal to two. Analogous to vertex colorings, we define the {\it cost of edge-distinguishing} of a graph $G$, denoted $\rho'(G)$, as the least number of edges in the smaller of the two color classes in a distinguishing coloring.  

After recalling some required preliminaries to the Cartesian product of graphs in the next section, we start with some results on the cost of edge-distinguishing of complete graphs. Then, we develop a machinery for a lemma which states a sufficient condition for a coloring of a Cartesian product of connected graphs to be distinguishing. Using this lemma, we investigate cost of distinguishing and of edge-distinguishing of the Cartesian products of paths and cycles and in particular, we find some bounds for cost of edge-distinguishing of hypercubes and the Cartesian powers of complete graphs. 

\section{Preliminaries}

The {\it Cartesian product} of graphs $G$ and $H$ is a graph, denoted by $G \Box H$, whose vertex set is $V(G) \times V(H)$, and two vertices $(g, h)$, $(g', h')$ are adjacent if either $g = g'$ and $h h' \in E(H)$, or $g g' \in E(G)$ and $h = h'$. We denote $G \Box G$ by $G^2$, and we recursively define the {\it $k^{\textnormal{th}}$ Cartesian power} of $G$ as $G^k = G \Box G^{k-1}$. Graph $G$ is {\it prime} with respect to the Cartesian product if it cannot be represented as the Cartesian product of two graphs non-ismorphic with $G$. We say that graphs $G$ and $H$ are {\it relatively prime} if they do not have any non-trivial common factor \cite{hik}. 

Sabidussi and Vizing proved that every connected graph has a unique prime factorization with respect to the Cartesian product (cf.~\cite{hik}). The following well-known characterization of the automorphism group of product graphs, due to Imrich (and independently to Miller), is the core of our investigations in this paper.

\begin{theorem} \cite{hik} \label{autG}
Suppose $\psi$ is an automorphism of a connected graph $G$ with prime factor decomposition $G = G_1 \Box G_2 \Box \dots \Box G_k$. Then there is a permutation $\pi$ of the set $\{1, 2, \dots , k\}$ and there are isomorphisms $\psi_i \colon G_{\pi(i)} \mapsto G_i$, $i=1, \dots, k$, such that
$$\psi(x_1, x_2, \dots, x_k) = (\psi_1 (x_{\pi(1)}), \psi_2 (x_{\pi(2)}), \dots, \psi_r (x_{\pi(k)})).$$
\end{theorem}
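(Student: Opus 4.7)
The plan is to exploit the structure of the Cartesian product through its \emph{layers}. For each coordinate $i$ and each vertex $v = (v_1, \dots, v_k)$, the $G_i$-\emph{layer} through $v$ is the subgraph on those vertices that differ from $v$ only in the $i$-th coordinate; it is isomorphic to $G_i$, and the edges of $G$ partition according to which coordinate they cross. The core of the argument is to show that any automorphism $\psi$ must carry $G_i$-layers to $G_j$-layers for a single $j = \pi(i)$, yielding the required permutation of coordinates. The conclusion will then follow by restricting $\psi$ to layers.

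First, I would isolate coordinates intrinsically via edges. Define the \emph{coordinate class} of an edge $uv$ to be the unique index $i$ such that $u$ and $v$ differ only in their $i$-th entry. The central geometric fact is the \emph{square property}: if $e$ and $f$ are two edges meeting at a common vertex $x$ and belonging to different coordinates, then there is a unique induced $4$-cycle through $e$ and $f$ whose opposite sides match in coordinate. Using this property one defines a relation $\theta$ on $E(G)$ by declaring two edges related when they are opposite sides of some such square, and then taking the reflexive, transitive closure. The crucial claim is that the $\theta$-classes coincide with the coordinate classes; this step leans on the primeness of each factor $G_i$ and the Sabidussi--Vizing uniqueness of prime factorization.

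Since an automorphism $\psi$ preserves adjacency, incidence, and induced $4$-cycles, it preserves $\theta$, and therefore it carries coordinate classes to coordinate classes. This induces a permutation $\pi$ of $\{1, \dots, k\}$ by the rule that edges of coordinate $\pi(i)$ are sent to edges of coordinate $i$. Because each layer is connected and is spanned by the edges of a single coordinate, every $G_{\pi(i)}$-layer is mapped onto some $G_i$-layer. Fixing a base point and restricting $\psi$ to the $G_{\pi(i)}$-layer through it gives a graph isomorphism $\psi_i \colon G_{\pi(i)} \to G_i$; one then checks that the choice of base point is immaterial, so the same $\psi_i$ describes the action of $\psi$ on every parallel layer. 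Assembling these ingredients yields
$$\psi(x_1, \dots, x_k) = \bigl(\psi_1(x_{\pi(1)}),\, \psi_2(x_{\pi(2)}),\, \dots,\, \psi_k(x_{\pi(k)})\bigr).$$

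The main obstacle is establishing that the $\theta$-equivalence faithfully detects coordinates when each factor is prime: a square could, in principle, link edges of different coordinates and so merge classes spuriously, and preventing this requires genuine use of primeness together with the Sabidussi--Vizing theorem. Once the coordinate classes are intrinsically characterized, the independence of $\psi_i$ from the chosen base layer and the verification of the coordinatewise formula are routine.
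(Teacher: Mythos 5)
This theorem is quoted from the Handbook of Product Graphs and the paper offers no proof of it, so there is no in-paper argument to compare against; I can only assess your sketch on its own terms. Your overall strategy --- intrinsically recover the coordinate partition of $E(G)$ from the graph structure alone, observe that any automorphism must preserve it, and then read off the permutation $\pi$ and the isomorphisms $\psi_i$ from the induced action on layers --- is indeed how the result is proved. However, the relation $\theta$ you propose does not do the job, and the failure is in the opposite direction from the one you flag as the ``main obstacle.'' Opposite edges of a product square always lie in the \emph{same} coordinate, so $\theta$ never merges classes spuriously; rather, it merges far too little. A square through two edges of different coordinates relates an edge only to its translates across steps in the other coordinates, so the $\theta$-classes are the parallel classes of \emph{individual factor edges}: in $P_3 \Box P_3$, for instance, each coordinate class splits into two $\theta$-classes, and no amount of transitive closure repairs this. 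The standard relation (Djokovi\'c/Feder, as in the cited Handbook) must additionally declare two \emph{adjacent} edges equivalent whenever they lie in no common chordless square, and even then a convex-closure step is needed before the classes yield a product decomposition.

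Beyond that, the entire weight of the theorem rests on the claim you defer, namely that the intrinsically defined classes coincide with the coordinate classes of the \emph{prime} factorization; this is essentially the Sabidussi--Vizing uniqueness theorem itself, which you invoke rather than establish. A cleaner route, and the one the source actually takes, is to use uniqueness of the prime factorization as a black box: the images under $\psi$ of the $G_i$-layers constitute a second prime factorization of $G$, so by uniqueness in its strong form (equality of the coordinate partitions, not merely abstract isomorphism of the multiset of factors) $\psi$ permutes the layer partitions, and the coordinatewise formula follows by restriction to layers exactly as you describe. As written, your proposal is an outline whose one substantive assertion --- that $\theta$ detects coordinates --- is false, so the gap is genuine.
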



Let $G=G_1 \Box G_2 \Box \dots \Box G_k$ be a prime factor decomposition of a connected graph $G$. Then, for each factor $G_i$ let the vertex set be $V(G_i) = \{x_{i1}, x_{i2}, \dots, x_{in_i}\}$, where $n_i$ is the order of the graph $G_i$. Then every vertex of the Cartesian product is of the form $(x_{1j_1},x_{2j_2}, \dots , x_{kj_k})$, where $x_{ij_i} \in V(G_i)$. Two vertices of the Cartesian product form an edge $$(x_{1j_1}, x_{2j_2}, \dots , x_{kj_k})(x_{1l_1}, x_{2l_2}, \dots , x_{kl_k})$$ if there exists exactly one index $i=1,\ldots,k$ such that $x_{ij_i} x_{il_i}$ is an edge of the factor $G_i$ and $x_{tj_t}=x_{tl_t}$ for all indices $t$ other than $i$. Given a vertex $v = (v_1 , v_2 , \ldots , v_k )$ of the product $G = G_1 \square G_2 \square \ldots \square G_k$, the \emph{$G_i$-layer through $v$} is the induced subgraph
$$G_{i}^{v}=G \left[ \{ x \in V (G) \ \vert \  p_{j}(x) = v_j \text{ for }j\neq i\}\right],$$

\noindent where $p_j$ is the projection mapping to the $j^\text{th}$-factor of $G$ \cite{hik}. It is clear that $G_{i}^{v}\simeq G_i$.

By \emph{$i^{\text{th}}$-quotient subgraph of $G$} we mean the graph
$$Q_{i}=G \diagup G_{i} \simeq G_1 \square \ldots \square G_{i-1} \square G_{i+1} \square \ldots \square G_k.$$
It is also evident that $G \simeq G_i \square Q_i$ \cite{hik}.

In Section 5 we make use of a tool developed by Boutin \cite{Butin-x, b1} called the determining set of a graph. 
A subset $S$ of the vertices of a graph $G$ is called a \emph{determining set} if whenever $g$ and $h$ are automorphisms of $G$ with the property that $g(s) = h(s)$ for all $s \in S$, then $g = h$. In particular, the following proposition gives a usefull characterization of a determining set.

\begin{proposition} \cite{Butin-x}
	Let $S$ be a subset of the vertices of the graph $G$. Then $S$ is a determining set for $G$ if and only if $\mathrm{Stab}(S) = \{ \id \}$.
\end{proposition}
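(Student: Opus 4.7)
The plan is to prove the equivalence by unpacking the definitions of a determining set and of the pointwise stabilizer, handling each direction separately. The whole argument reduces to the observation that ``two automorphisms $g,h$ agree on $S$'' is merely a restatement of ``$h^{-1}\circ g$ fixes every element of $S$,'' which is exactly the membership condition for $\mathrm{Stab}(S)$.

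For the forward implication, I would assume $S$ is a determining set and take any $\sigma \in \mathrm{Stab}(S)$. Then $\sigma(s) = s = \id(s)$ for every $s \in S$, so $\sigma$ and $\id$ agree on $S$; the defining property of a determining set forces $\sigma = \id$, giving $\mathrm{Stab}(S) = \{\id\}$. Conversely, assuming $\mathrm{Stab}(S) = \{\id\}$, I would take $g,h \in \Aut(G)$ with $g(s) = h(s)$ for all $s \in S$ and consider the composition $h^{-1}\circ g$. Applying $h^{-1}$ on the left of the agreement relation yields $(h^{-1}\circ g)(s) = s$ for every $s \in S$, so $h^{-1}\circ g \in \mathrm{Stab}(S) = \{\id\}$, and therefore $g = h$, which is the determining condition.

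I do not expect any real obstacle here: the only ingredient beyond the definitions is that $\Aut(G)$ is a group, so that inverses exist and composition is well defined and associative. Consequently, the heart of the proof is simply recognizing that the two formulations are tautologically equivalent once one passes to the language of the pointwise stabilizer; no structural feature of $G$ itself is used.
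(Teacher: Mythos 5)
Your argument is correct and is the standard one: the paper itself states this proposition as a cited result of Boutin without proof, and your unwinding of the definitions (taking $\sigma\in\mathrm{Stab}(S)$ and comparing it with $\id$ for one direction, and passing to $h^{-1}\circ g$ for the other) is exactly the intended reasoning, with the implicit understanding that $\mathrm{Stab}(S)$ denotes the \emph{pointwise} stabilizer, as it does in Boutin's paper. Nothing is missing.
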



 \section{The cost of edge-distinguishing of complete graphs}
 
 We recall that $D'(K_n)=2$ for $n\geq 6$. One way to prove this is to find an asymmetric spanning subgraph in $K_n$, see \cite{kp} and \cite{ac}. Forgetting about the trivial case $K_1$, the smallest asymmetric graph has six vertices while the smallest asymmetric tree has seven vertices, see Figures \ref{G61} and \ref{T71} \cite{Q}.
 
 \begin{figure}	[h!]
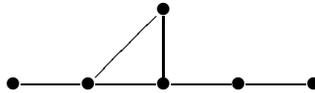

 	\[ \xygraph{
 		!{<0cm,0cm>; <1cm,0cm>:<0cm,1cm>::}
 		!{(1,1)}*{\bullet}="v1" !{(2,1)}*{\bullet}="v2"
 		!{(3,1)}*{\bullet}="v3" !{(3,2)}*{\bullet}="v4" 
 		!{(4,1)}*{\bullet}="v5" !{(5,1)}*{\bullet}="v6"
 		"v1"-@[black] "v2" "v2"-@[black] "v3" "v2"-@[black] "v4" 
 		"v3"-@[black] "v4" "v3"-@[black] "v5"
 		"v5"-@[black] "v6" 
 	} \]
 	\caption{The only asymmetric uni-cyclic graph on six vertices.}\label{G61}
 \end{figure}

\begin{figure}[h!]
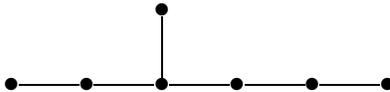
 
	\[ \xygraph{
		!{<0cm,0cm>; <1cm,0cm>:<0cm,1cm>::}
		!{(1,1)}*{\bullet}="v1" !{(2,1)}*{\bullet}="v2"
		!{(3,1)}*{\bullet}="v3" !{(3,2)}*{\bullet}="v4" 
		!{(4,1)}*{\bullet}="v5" !{(5,1)}*{\bullet}="v6"
		!{(6,1)}*{\bullet}="v7"
		"v1"-@[black] "v2" "v2"-@[black] "v3"  
		"v3"-@[black] "v4" "v3"-@[black] "v5"
		"v5"-@[black] "v6" "v6"-@[black] "v7"
	} \]
 \caption{The only asymmetric tree on seven vertices.}\label{T71}
\end{figure}
 
 If the red edges in a $2$-coloring of a complete graph on six or seven vertices induce these structures, then the coloring breaks all the automorphisms of $K_6$ and $K_7$. When $n \geq 8$, we can always have a vertex which is incident with only blue edges. Hence, we get $\rho' (K_6)=\rho' (K_7)=\rho' (K_8)=6$.
In general, the we obtain the following bounds on the cost of edge-distinguishing of complete graphs.
 
 \begin{theorem} \label{Kn}
 	Let $K_n$ be a complete graph on $n \geq 6$ vertices. Then $\rho'(K_n)$ is the minimum number of edges in an asymmetric graph on $n$ vertices. We have $n-2-\lfloor \frac{n-1}{7}\rfloor \leq \rho'(K_n) \leq n-2$ for $n\geq 8$. Moreover, $\rho'(K_n) \sim n$.
 \end{theorem}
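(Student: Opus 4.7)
The plan is to split the theorem into three claims: the identification $\rho'(K_n)=f(n)$, where $f(n)$ denotes the minimum number of edges of an asymmetric graph on $n$ vertices; the upper bound $f(n)\le n-2$; and a lower bound from which the asymptotic $\rho'(K_n)\sim n$ follows. The identification is immediate from the fact that $\Aut(K_n)=S_n$: a permutation $\pi$ of $V(K_n)$ preserves a $2$-edge-coloring of $K_n$ into a red spanning subgraph $R$ and its complementary blue subgraph $B$ if and only if $\pi\in\Aut(R)$ (equivalently $\pi\in\Aut(B)$), so the coloring is distinguishing exactly when $R$ is asymmetric. Since the extremal $|E(R)|$ is well below $\binom{n}{2}/2$, the smaller color class is $R$ itself, giving $\rho'(K_n)=f(n)$.

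For the upper bound I would construct an asymmetric graph on $n$ vertices with $n-2$ edges: take an asymmetric tree $T$ on $n-1$ vertices, which exists whenever $n-1\ge 7$ by iteratively attaching a leaf to the asymmetric tree of Figure~\ref{T71} at a position that preserves asymmetry, and adjoin one isolated vertex $v$. This graph has $n-2$ edges and is asymmetric because every automorphism must fix the unique degree-$0$ vertex $v$ and then restrict to the identity on $T$.

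The main obstacle is the lower bound. Let $G$ be any asymmetric graph on $n$ vertices with components $C_1,\ldots,C_c$. I would first establish two structural facts: each $C_i$ must itself be asymmetric (otherwise extend a nontrivial component automorphism by the identity elsewhere), and the $C_i$ must be pairwise non-isomorphic (otherwise swap two isomorphic components). Since no asymmetric connected graph of order $2,3,4$ or $5$ exists, every component has order $1$, $6$, or $\ge 7$, with at most one component of order $1$; moreover every connected asymmetric graph of order $k\ge 7$ has at least $k-1$ edges, while one of order $6$ has at least $6$ edges (there being no asymmetric tree of order $6$). Writing $a\in\{0,1\}$, $s_6$, and $c''$ for the numbers of components of orders $1$, $6$, and $\ge 7$, summing the per-component minima gives $|E(G)|\ge n-a-c''$, while the vertex total $a+6s_6+\sum_{k_i\ge 7}k_i=n$ combined with $k_i\ge 7$ yields $c''\le\lfloor(n-a-6s_6)/7\rfloor$. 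Optimizing the resulting inequality over $(a,s_6)\in\{0,1\}^2$ produces $|E(G)|\ge n-1-\lfloor(n-1)/7\rfloor$, which implies the stated bound $n-2-\lfloor(n-1)/7\rfloor$.

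Finally, for the asymptotic $\rho'(K_n)\sim n$, I would sharpen the component count: because the components of $G$ must be pairwise non-isomorphic and only finitely many asymmetric connected graphs exist on each fixed number of vertices, the total number of components is forced to be $o(n)$ (for instance, insisting on distinct orders already gives $c=O(\sqrt{n})$ from $7+8+\cdots+(6+c)\le n$). Combined with $|E(G)|\ge n-c$ and the upper bound $\rho'(K_n)\le n-2$, this yields $\rho'(K_n)/n\to 1$.
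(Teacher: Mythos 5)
Your proposal is correct and follows essentially the same route as the paper: the same reduction of distinguishing $2$-edge-colorings of $K_n$ to asymmetric (almost) spanning subgraphs, the same upper-bound construction (an asymmetric tree on $n-1$ vertices plus an isolated vertex), and the same two structural facts (components asymmetric and pairwise non-isomorphic) driving both the lower bound and the asymptotics; your lower-bound count is in fact spelled out more carefully than the paper's one-sentence version, and your direct ``$c=o(n)$'' formulation of the asymptotic step is equivalent to the paper's proof by contradiction with a ratio $p/q<1$. The one slip is the parenthetical claim that one may ``insist on distinct orders'' to get $c=O(\sqrt{n})$: components need only be pairwise non-isomorphic, and arbitrarily many asymmetric connected graphs share the same order, so that shortcut is not available. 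Your primary justification still closes the gap, but it needs one more line: for any fixed $M$, only finitely many (say $N_M$) pairwise non-isomorphic asymmetric connected graphs have order at most $M$, so all but at most $N_M$ components have order exceeding $M$, whence $c\le N_M+n/M$ and $\limsup_n c/n\le 1/M$ for every $M$, giving $c=o(n)$ as claimed.
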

 
 \begin{proof}
 	For $n\geq 9$, consider a coloring such that red edges form an asymmetric tree on $n-1$ vertices, while the remaining edges are colored blue. Such coloring breaks all non-trivial automorphisms of a complete graph. The upper bound on the cost of edge-distinguishing of a complete graph follows directly from this coloring. Obviously, $\rho'(K_n)$ is the number of edges in an asymmetric graph on $n$ vertices with minimum number of edges. And the lower bound comes from the fact that the smallest asymmetric tree has 7 vertices and the fact that if an asymmetric spanning subgraph on $n-1$ vertices has less than $n-2-\lfloor \frac{n-1}{7}\rfloor$ edges, then at least one of its connected components has less than 6 edges, which is impossible.
 	
 	Let us now assume that for infinitely many $n$ the inequality $\frac{\rho ' (K_n)}{n} \leq \frac{p}{q}$ holds for some integers $p$ and $q$ such that $p<q$. Furthermore, consider a distinguishing coloring of $K_n$ with $\rho'(K_n)$ red edges while the remaining edges are blue. Let $H$ be the graph induced by red edges on the vertices on $K_n$. Graph $H$ is almost spanning $K_n$ and is asymmetric. But we can assume that $H$ is spanning to make the calculations easier. Therefore, its components are also asymmetric and mutually non-isomorphic connected graphs. There are at least $n- \frac{p}{q}n = n(1 - \frac{p}{q})$ connected components in $H$. For if $H$ is a forest, then it has $n-\rho'(K_n) \geq n- \frac{p}{q} n$ components, and if $H$ contains a cycle, then it has more components than a forest with the same number of vertices and edges. Let $x$ be the average number of vertices in a connected component of $H$. Therefore,
 	$$x \leq \frac{n}{n (1-\frac{p}{q})} = \frac{q}{q-p}.$$
 	The number of asymmetric connected graphs with order bounded by a given constant is finite. However, the number of components of $H$ tends to infinity as $n$ grows to infinity. Therefore, for sufficiently large $n$, graph $H$ will have components of order large enough that the average number of vertices in a component of $H$ will exceed $\frac{q}{q-p}$. We arrive at a contradiction which concludes the proof.
 \end{proof}
 
 From Theorem \ref{Kn}, it can be deduced that for $8\leq n \leq 15$ we have $\rho'(K_n)=n-2$. However, we know that $\rho'(K_{16})=13$, as the following argument suggests. Cover vertices of $K_{16}$ by asymmetric trees of orders 1, 7 and 8, as Figure \ref{F16} suggests. Then these partition make a spanning asymmetric forest for $K_{16}$. This forest has 13 edges and so $\rho'(K_{16})\leq 13$. Since 12 edges in $K_{16}$ must have at lest 2 isomorphic components or a symmetric one, then it can be deduce that $\rho'(K_{16})>12$.
 
\begin{figure}[h!]
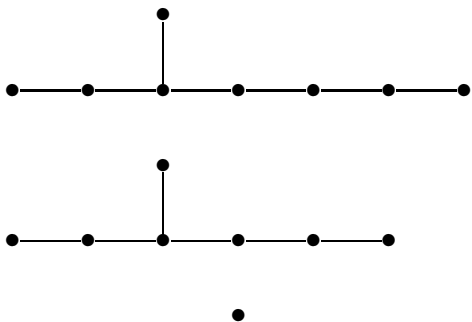
 
	\[ \xygraph{
		!{<0cm,0cm>; <1cm,0cm>:<0cm,1cm>::}
		!{(4,1)}*{\bullet}="v0"
		!{(1,2)}*{\bullet}="v1" !{(2,2)}*{\bullet}="v2"
		!{(3,2)}*{\bullet}="v3" !{(3,3)}*{\bullet}="v4" 
		!{(4,2)}*{\bullet}="v5" !{(5,2)}*{\bullet}="v6"
		!{(6,2)}*{\bullet}="v7"
		!{(1,4)}*{\bullet}="v8" !{(2,4)}*{\bullet}="v9"
		!{(3,4)}*{\bullet}="v10" !{(3,5)}*{\bullet}="v11" 
		!{(4,4)}*{\bullet}="v12" !{(5,4)}*{\bullet}="v13"
		!{(6,4)}*{\bullet}="v14" !{(7,4)}*{\bullet}="v15"
		"v1"-@[black] "v2" "v2"-@[black] "v3"  
		"v3"-@[black] "v4" "v3"-@[black] "v5"
		"v5"-@[black] "v6" "v6"-@[black] "v7"
		"v8"-@[black] "v9" "v9"-@[black] "v10"  
		"v10"-@[black] "v11" "v10"-@[black] "v12"
		"v12"-@[black] "v13" "v13"-@[black] "v14"
		"v14"-@[black] "v15"
	} \]
	\caption{An asymmetric forest on 16 vertices.}\label{F16}
\end{figure}
 
 We can think of a list of asymmetric trees, namely $\Omega = \{T_1 , T_2 , \ldots \}$, that for each $i=1, 2, \ldots$ gives an asymmetric tree so that for each $j>i$ we have $\vert T_j \vert \geq \vert T_i \vert$. The following procedure gives an asymmetric spanning forest in a complete graph.

 \begin{procedure}\label{pro}
 	For $n\geq 8$, to find an asymmetric red-edge-forest in the complete graph $K_n$, take the following steps.
 	\begin{itemize}
 		\item[Step 1.] Choose a vertex of $K_n$ and remove it from the set of avaiable vertices.
 		\item[Step 2.] Take the first asymmetric tree from $\Omega$ which is not already used, namely $T_i$. Choose $\vert T_i \vert$ vertices from available vertices of $K_n$ and color the edges of the subgraph induced by them red so that the subgraph induced by the red edges is isomorphic to $T_i$. Remove chosen vertices for the set of available vertices.   
 		\item[Step 3.] Check if the number of available vertices of $K_n$ is greater than the number of vertices of the first asymmetric tree from $\Omega$ which is not already used. If so, go to Step 2. Else, if the number of unchosen vertices of $K_n$ is zero proceed to Step 4, and if it is not zero, delete the last chosen red-edge asymmetric tree, put its vertices back to set of available vertices and choose a tree from $\Omega$, namely $T^\ast$, that has the same number of vertices that are available. Color some edges of the induced subgraph of the remaining set red so that the subgraph induced by these red edges is isomorphic to $T^\ast$ and proceed to Step 4.
 		
 		\item[Step 4.] Color all the remaining edges blue.
 	\end{itemize}
 	 
 \end{procedure}

Like Quintas \cite{Q}, by $a_n$ we mean the number of asymmetric trees having $n$ vertices, and for each integer $n\geq 8$ let $N$ and $w$ be defined by
$$\sum_{i=1}^{N} i a_n \leq n < \sum_{i=1}^{N+1} i a_{n},$$ and $$n=\sum_{i=1}^{N} i a_{n} + w(N+1)+r$$ where $0\leq w<a_{N+1}$ and $0\leq r<N+1$. Using these symbols, we can explicitly count the number of red-edges after the termination of Procedure \ref{pro}.
 
 \begin{theorem}
 	For any natural number $n\geq 8$, edge-cost of the complete graph $K_n$ is the number of red edges of an almost spanning asymmetric forest produced by Procedure \ref{pro}. In particular, for $n\geq 8$ we have $$\rho'(K_{n}) = n-\sum_{i=1}^{N} a_{n} -w.$$
 \end{theorem}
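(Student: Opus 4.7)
The proof combines an explicit upper bound coming from Procedure~\ref{pro} with a matching extremal lower bound. For the upper bound, I would first verify that Procedure~\ref{pro} terminates and produces a spanning red forest $F$ of $K_n$ whose components are pairwise non-isomorphic asymmetric graphs. The vertex removed in Step~1 is the unique isolated component, while Step~2 places distinct asymmetric trees of non-decreasing orders; the backtracking branch in Step~3 is triggered at most once. The only subtle verification is that the replacement tree $T^*$ in the backtracking step is asymmetric and non-isomorphic to all trees already placed: since the deleted tree was the largest one placed so far and $T^*$ has strictly greater order, this follows, and its existence is guaranteed by $a_i \geq 1$ for every $i \geq 7$.

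With $F$ in hand, color its edges red and the remaining edges of $K_n$ blue. Any color-preserving automorphism of $K_n$ must preserve $F$, hence restricts to an automorphism of $F$ on $V(K_n)$; because the isolated vertex is the unique vertex of its degree in $F$ it is fixed, and the pairwise non-isomorphism together with the asymmetry of each non-trivial component forces the restriction to be the identity. So the coloring is distinguishing. Counting red edges gives $|E(F)| = n - c(F)$; by construction $c(F) = 1 + \sum_{i=7}^{N} a_i + w$, which equals $\sum_{i=1}^{N} a_i + w$ using $a_1 = 1$ and $a_2 = \cdots = a_6 = 0$. This gives $\rho'(K_n) \leq n - \sum_{i=1}^{N} a_i - w$.

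For the matching lower bound, let $H$ be the subgraph of red edges in any distinguishing $2$-edge-coloring of $K_n$, regarded as a spanning subgraph. Then $H$ is asymmetric, so its components are pairwise non-isomorphic asymmetric graphs and at most one of them is an isolated vertex. Since any component that is not a tree uses at least as many edges as a spanning tree on its vertex set, minimizing $|E(H)|$ amounts to maximizing the number of tree components, which reduces to the bounded integer program of maximizing $\sum_i n_i$ subject to $\sum_i i\,n_i = n$ and $0 \leq n_i \leq a_i$, where $n_i$ counts asymmetric-tree components of order $i$. Its LP relaxation is solved by the obvious greedy rule, filling sizes $1, 7, 8, \ldots, N$ to capacity and then allotting $w + r/(N+1)$ fractional parts of size $N+1$; the LP optimum therefore lies strictly below $\sum_{i=1}^{N} a_i + w + 1$, so the integer optimum is at most $\sum_{i=1}^{N} a_i + w$ and hence $|E(H)| \geq n - \sum_{i=1}^{N} a_i - w$. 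The most delicate point in the whole argument is the bookkeeping confirming that Procedure~\ref{pro} really attains this optimum when $r > 0$, i.e.\ that the backtracking step truly exchanges one tree for another of the same count rather than losing a component.
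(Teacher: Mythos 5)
Your proof is correct and follows essentially the same route as the paper's: both reduce the problem to a minimum-size spanning subgraph of $K_n$ with trivial automorphism group, observe that its components must be pairwise non-isomorphic asymmetric trees with at most one isolated vertex, and then count. The only difference is that you carry out the extremal counting explicitly (via the greedy/knapsack argument and the bookkeeping of Procedure~\ref{pro}), whereas the paper delegates exactly that step to Theorem~1 of Quintas \cite{Q}.
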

 
 \begin{proof}
 	Suppose that we acquire a coloring of edges of $K_n$ with red and blue so that number of red edges is less than the number of blue edges. If there are two or more vertices which are not incident to red edges, then their transposition is a color-preserving non-trivial automorphism.
 	
 	Observe that red edges induce some connected components in $K_n$. If two of these connected components are isomorphic, then the automorphism mapping one to another preserves the edge-coloring.
 	
 	Finally, every connected component induced by red edges has to be asymmetric because otherwise a non-trivial automorphism of a component can be easily extended to an automorphism of $K_n$. Since a tree has the least number of edges among any connected graphs with the same number of vertices, every connected component induced by red edges has to be an asymmetric tree.
 	
 	The ``in particular" part is evident from Theorem 1 of \cite{Q}.
 \end{proof}

In the Appendix, we have calculated the cost of edge-coloring for $K_n$ when $6\leq n\leq 630$. From this table we can see that sometimes $\rho'(K_n ) = \rho' (K_{n+1})$, for example when $n=6,7,15,24,33$ or so on.

\section{Reduced-factor Coloring of the Cartesian Product}

Let $G$ be a connected graph which has a prime factor decomposition $G=G_1 \square G_2 \square \ldots \square G_k$ for some $k\geq 2$, and let $f$ be a total coloring for $G$. Since a vertex or edge coloring can be easily transformed to a total coloring, everything here works well if $f$ is a vertex or edge coloring instead of a total coloring. This can be done by coloring all edges or all vertices of the graph $G$ with a fixed color.

For $i=1,\ldots,k$, we describe some total colorings for $G_i$, which we need in Lemma \ref{AUL} to find out whether $f$ is a distinguishing coloring. Let $V(G_{i})=\{ 1_{i},\ldots, m_i \}$. For each $j=1,...,m$, consider the layer graph isomorphic to $G_i$ consisting of vertices $$u_j = ( 1_{1},1_{2}, \ldots, 1_{i-1}, j_{i}, 1_{i+1},\ldots, 1_{k} ),$$
\noindent where $1_r$ is the first vertex of $G_r$ in our fixed ordering.

We define \emph{the total coloring of $Q_i^{u_j}$ (induced by $f$)}, denoted by $\check{Q}_{i}^{u_j}= (Q_{i}^{u_j},f)$, to be the graph $Q_{i}^{u_j}$ together with the total coloring induced by $f$. We say that the color $\check{Q}_{i}^{u_j}$ is \emph{equivalent} to $\check{Q}_{i}^{u_t}$ if there is a (total) color-preserving isomorphism $\varphi: Q_{i}^{u_j}\longrightarrow Q_{i}^{u_t}$.


Let $e=u_{i}v_i$ be an edge of $G_i$, the $i^{\mathrm{th}}$ factor of $G$. By $\overline{Q_i^e}$ we mean a vertex-colored graph isomorphic to $Q_i$ whose vertex set consists of edges of $G$ of the form $$(u_i, x)(v_i, x)=(x_{1j_1},\ldots, u_{i}, \dots , x_{kj_k})(x_{1j_1},\ldots, v_{i}, \dots , x_{kj_k})$$ for some $x\in V(Q_{i})$. Two vertices $(u_i, x)(v_i, x)$ and $(u_i, y)(v_i, y)$ are adjacent in $\overline{Q_i^e}$ if $x$ is adjacent to $y$ in $Q_i$. When each vertex $(u_i, x)(v_i, x)$ of $\overline{Q_i^e}$ is colored by $f((u_i, x)(v_i, x))$, the resulting vertex-coloring is called \emph{the vertex-coloring of $\overline{Q_i^e}$ (induced by $f$)}, and is denoted by $\hat{\overline{Q_i^e}}=(\overline{Q_i^e},f)$. The color $\hat{\overline{Q_i^e}}$ is \emph{equivalent} to $\hat{\overline{Q_i^{e'}}}$ if there is a vertex-color-preserving isomorphism $\vartheta:\overline{Q_i^e} \longrightarrow \overline{Q_i^{e'}}$.

Now, we can describe the colorings we need for lemma \ref{AUL}. For a (total) coloring $f$ of $G$, we describe a total coloring of $G_i$ by edge and/or vertex-colored $Q_{i}$s; color each vertex $u_j$ by $\check{Q}_{i}^{u_j}$ and color each edge $e=u_j u_{\ell}$ by $\hat{\overline{Q_i^{e}}}$. This total coloring of $G_i$ is called \emph{reduced factor coloring of $G_i$ induced by $f$} and is denoted $G_i^f$. We say $G_i^f$ is \emph{equivalent to} $G_j^f$ and write $G_i^f \simeq G_j^f$ if there is a total-color-preserving isomorphism from one to another.

\begin{lemma}\label{AUL}
Let $G=G_1 \square G_2 \square \ldots \square G_k$ with $k\geq 2$ be a connected graph decomposed into Cartesian prime factors and $f$ be a (total) coloring of $G$. If for each $i=1,\ldots, k$, the reduced-factor coloring $G_i^f$ is a distinguishing coloring and we have $G_i^f \not\simeq G_j^f$ for all $j\neq i$, then $f$ is distinguishing coloring of $G$.
\end{lemma}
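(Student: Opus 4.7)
The plan is to fix an arbitrary $f$-preserving automorphism $\psi$ of $G$ and prove $\psi = \id$. By Theorem \ref{autG}, $\psi$ is determined by a permutation $\pi$ of $\{1,\dots,k\}$ together with graph isomorphisms $\psi_i \colon G_{\pi(i)} \to G_i$ via
$$\psi(x_1,\dots,x_k) = (\psi_1(x_{\pi(1)}),\dots,\psi_k(x_{\pi(k)})).$$
I would first establish $\pi = \id$ using the non-equivalence hypothesis $G_i^f \not\simeq G_j^f$ for $i\neq j$, and then deduce each $\psi_i = \id$ using the hypothesis that every $G_i^f$ is distinguishing.

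For the first step, suppose for contradiction that $\pi(i) = j$ for some $i \neq j$. The key observation is that the only coordinate of $\psi(x)$ that depends on $x_j$ is the $i$-th, and it equals $\psi_i(x_j)$; hence $\psi$ sends the $Q_j$-layer $\{x : x_j = \ell\}$ bijectively onto the $Q_i$-layer $\{y : y_i = \psi_i(\ell)\}$. Being a graph automorphism that preserves $f$, the restriction is a total-color-preserving isomorphism $\check{Q}_j^{v} \to \check{Q}_i^{\psi(v)}$ for any $v$ with $v_j = \ell$. A parallel calculation for an edge $e \in E(G_j)$ shows that $\psi$ carries the vertices of $\overline{Q_j^e}$ (which are the $G$-edges in the $G_j$-direction above $e$) bijectively onto the vertices of $\overline{Q_i^{\psi_i(e)}}$, preserving adjacency (the off-$i$ coordinates of $\psi$ assemble into a graph isomorphism $Q_j \to Q_i$) and preserving the $f$-induced vertex coloring. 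Combining these, $\psi_i$ itself becomes a total-color-preserving isomorphism from $G_j^f$ to $G_i^f$, contradicting the hypothesis.

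Once $\pi = \id$, the decomposition collapses to $\psi(x_1,\dots,x_k) = (\psi_1(x_1),\dots,\psi_k(x_k))$. Repeating the same layer-tracking argument with $i = j$ shows that each $\psi_i \in \Aut(G_i)$ is a total-color-preserving automorphism of $G_i^f$. Since $G_i^f$ is distinguishing, $\psi_i = \id$ for every $i$, whence $\psi = \id$. I expect the only real obstacle to be a mild bookkeeping issue: the reduced-factor coloring of $G_i$ is defined using the specific base vertices $v_\ell = (1_1,\dots,\ell_i,\dots,1_k)$, whereas $\psi$ generally will not send these to the analogous base vertices in $G_i$. This is harmless because the layer $Q_i^v$, viewed as an induced subgraph of $G$, depends only on the $i$-th coordinate of $v$, so different representatives yield literally identical totally-colored $\check{Q}_i^{\cdot}$ and identical $\overline{Q_i^e}$; this identification lets us equate $\psi(\check{Q}_j^{v_\ell})$ with the canonical $\check{Q}_i^{v_{\psi_i(\ell)}}$ and makes both steps above straightforward coordinate tracking.
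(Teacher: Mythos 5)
Your proposal is correct and follows essentially the same route as the paper: invoke Theorem \ref{autG}, use the pairwise non-equivalence of the reduced-factor colorings to force the permutation $\pi$ to be the identity, and then use the fact that each $G_i^f$ is distinguishing to force each $\psi_i = \id$. The only difference is that you spell out the layer-tracking details showing that a color-preserving automorphism with $\pi(i)=j$ would induce a total-color-preserving isomorphism $G_j^f \to G_i^f$, which the paper simply asserts.
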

\begin{proof}

Suppose that the condition holds. consequently, if $\varphi:G\longrightarrow G$ is a color-preserving automorphism, then, by Theorem \ref{autG}, there is a permutation $\pi$ of the set $\{1, 2, \dots , k\}$ and there are isomorphisms $\psi_i \colon G_{\pi(i)} \mapsto G_i$, such that
$$\varphi(x_1, x_2, \dots, x_k) = (\varphi_1 (x_{\pi(1)}), \varphi_2 (x_{\pi(2)}), \dots, \varphi_k (x_{\pi(k)}))$$ where $i=1, \dots, k$. Since we have $G_i^f \not\simeq G_j^f$ for all $j\neq i$, it can be deduced that $\pi$ is the identity permutation. Thus, $\varphi=(\varphi_1,\ldots,\varphi_k)\in\oplus_{i=1}^{k} \mathrm{Aut}(G_i)$. Now, for each $i=1,\ldots,k$, the automorphism $\varphi_i$ has to be $\id_{G_i}$ because $G_i^f$ is a total distinguishing coloring. As a result, $\varphi=\id_G$.
\end{proof}

\begin{remark}
As noted at the beginning of this section, Lemma \ref{AUL} must also be considered true whenever $f$ is a vertex or edge coloring. However, when $f$ is a vertex coloring of $G$, it is redundant to color each edge $e$ of $G_i$ with $\hat{\overline{Q_i^{e}}}$, because all edges of $G_i$ will receive the same color.
\end{remark}




\section{Some results}

In all of the following cases, we consider an edge or vertex coloring of a given Cartesian product with two colors, say red and blue. Assume that the number of blue elements in any considered coloring is greater than the number of red ones. Therefore, to prove any results for the cost or edge-cost, it suffices to minimize the number of red elements in the coloring and show that all nontrivial automorphisms are broken.

\begin{theorem}
Let $P_n$ be a path on $n$ vertices, where $n \geq 3$. If $k \leq 1 + \lfloor \frac{n}{2} \rfloor$, then $\rho ' (P_n^k) =1$. Moreover, if $t \leq \lfloor \frac{n}{2} \rfloor$, then $\rho (P_n^t) =1$.
\end{theorem}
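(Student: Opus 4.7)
The plan is to exhibit, for each statement, an explicit single red vertex or edge whose set-stabilizer in the relevant automorphism group is trivial, and to read off the triviality directly from Theorem~\ref{autG} rather than routing through Lemma~\ref{AUL}. Throughout I will use the fact that $\Aut(P_n) = \{\id, \tau\}$ where $\tau(j) = n + 1 - j$, so every automorphism of a Cartesian power $P_n^r$ has the form $\varphi(x_1, \ldots, x_r) = (\varphi_1(x_{\pi(1)}), \ldots, \varphi_r(x_{\pi(r)}))$ for some permutation $\pi \in S_r$ and factor maps $\varphi_\ell \in \{\id, \tau\}$.

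I would start with the vertex statement by coloring $w = (1, 2, \ldots, t)$ red and every other vertex blue. If $\varphi$ is a color-preserving automorphism, then $\varphi(w) = w$ gives $\varphi_j(w_{\pi(j)}) = w_j = j$ for every $j$. When $\varphi_j = \id$ this forces $\pi(j) = j$ because the coordinates of $w$ are pairwise distinct. When $\varphi_j = \tau$ the coordinate $w_{\pi(j)} \in \{1, \ldots, t\}$ would have to equal $n + 1 - j \in \{n + 1 - t, \ldots, n\}$; the hypothesis $t \leq \lfloor n/2 \rfloor$ makes these two ranges disjoint, so this case cannot occur. Thus $\varphi$ is forced to be the identity, so one red vertex suffices.

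For the edge statement I would color the single edge $e$ joining $u = (1, 1, 2, \ldots, k-1)$ and $v = (2, 1, 2, \ldots, k-1)$ red. Since $e$ is the unique red edge, any color-preserving $\varphi$ must map $e$ to itself; because $u$ and $v$ differ only in coordinate~$1$, the image edge lies in direction $\pi^{-1}(1)$, forcing $\pi(1) = 1$. If $\varphi$ swaps $u$ and $v$ then $\varphi_1$ swaps $1$ and $2$, which requires $\tau(1) = 2$, i.e.\ $n = 2$, contradicting $n \geq 3$. So $\varphi$ fixes both endpoints, giving $\varphi_1 = \id$, and the remaining data $(\pi|_{\{2, \ldots, k\}}, \varphi_2, \ldots, \varphi_k)$ stabilize the sub-tuple $(u_2, \ldots, u_k) = (1, 2, \ldots, k-1)$ viewed as a vertex of $P_n^{k-1}$. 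Applying the vertex case with $t = k - 1 \leq \lfloor n/2 \rfloor$ then kills this residual stabilizer and yields $\varphi = \id$.

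The delicate point is the choice of the trailing coordinates: using $u_j = j - 1$ for $j \geq 2$ (rather than the naive $u_j = j$) is exactly what extends the admissible range from $k \leq \lfloor n/2 \rfloor$ to $k \leq 1 + \lfloor n/2 \rfloor$ and makes the hypothesis tight. Once this shift is in place the reduction from the edge case to the vertex case is mechanical, and the conceptual heart of the whole argument is the single disjointness inequality $\{1, \ldots, t\} \cap \{n + 1 - t, \ldots, n\} = \emptyset$ that powers the vertex case.
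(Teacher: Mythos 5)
Your proof is correct, and it takes a genuinely different route from the paper's. The paper verifies the same edge coloring (the red edge $(1,1,2,\dots,k-1)(2,1,2,\dots,k-1)$) by passing through the reduced-factor-coloring machinery of Lemma~\ref{AUL}: it shows each $P_n^{(i)}$ inherits a distinguishing coloring of the path and that these inherited colorings are pairwise non-equivalent. You instead compute the stabilizer of the red object directly from the structure of $\Aut(P_n^k)$ given by Theorem~\ref{autG}, which is more elementary and makes the role of each hypothesis transparent: the bound $t\leq\lfloor n/2\rfloor$ is exactly the disjointness $\{1,\dots,t\}\cap\{n+1-t,\dots,n\}=\emptyset$ that kills the reflections, and the distinctness of the coordinates kills the factor permutation. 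Your route also quietly repairs a defect in the paper's vertex case: the paper colors the vertex $(x_{11}x_{21}x_{32}\dots x_{kk-1})$, i.e.\ the coordinate tuple $(1,1,2,\dots,k-1)$, whose first two coordinates coincide; the transposition of the first two factors then preserves that coloring (equivalently, the reduced-factor colorings of the first two factors are equivalent, so Lemma~\ref{AUL} does not apply), whereas your choice $(1,2,\dots,t)$ has pairwise distinct coordinates and genuinely works. The only things worth adding are one sentence each for the trivial lower bounds $\rho,\rho'\geq 1$ (the graphs have nontrivial automorphisms, so the smaller color class cannot be empty) and for the primeness of $P_n$, which licenses the application of Theorem~\ref{autG}.
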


\begin{proof} Since the graph $P_n^k$ is 2-edge-distinguishable, its edge-cost must be greater or equal to 1. Let us denote its consecutive factors by $P_n^{(1)}$, $P_n^{(2)}$, \dots, $P_n^{(k)}$. Color the vertex $$(x_{11}x_{21}x_{32} \dots x_{kk-1})$$ red while the other ones are blue.

All the reduced-factor colorings of $P_n^{(i)}$ has only one vertex which is colored differently from other vertices. For $P_n^{(i)}$-layer the color of vertex $x_{i(i-1)}$ is different from the other ones. Hence, for each $i=1,\ldots,k$, the reduced-factor coloring of $P_n^{(i)}$ are distinguishing. Meanwhile, for each pair of reduced-factor colored $P_n^{(i)}$ and $P_n^{(j)}$, they are not equivalent, therefore, there does not exist a permutation of the factors which would preserve the coloring of the Cartesian product. Then by Lemma \ref{AUL} all automorphisms of the Cartesian product are broken.

Now, to prove the implication for the edge-cost, color the edge $$(x_{11}x_{21}x_{32} \dots x_{kk-1})(x_{12}x_{21}x_{32} \dots x_{kk-1})$$ by red and let the remaining ones be blue. In the reduced-factor coloring of $P_n^{(1)}$ the edge $x_{11}x_{12}$is colored differently than the remaining ones. Therefore, this coloring is distinguishing. Together with a similar argument as above, we see that all the automorphisms of $G$ are broken.
\end{proof}

The case that all factors of the Cartesian product of paths are pairwise prime to each other is actually simpler.

\begin{theorem}
Let $G = \Box_{i=1}^k P_{n_i}$ be a Cartesian product of $k$ paths, where $n_i \neq n_j$ for $i \neq j$ and $n_i \geq 3$ for all $i \in \{1, \dots, k\}$. Then $\rho ' (G) =\rho (G)= 1$.
\end{theorem}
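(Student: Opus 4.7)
My plan is to mimic the proof of the preceding theorem and reduce both statements to Lemma \ref{AUL} by placing a single red element. For the vertex-cost I will paint the corner vertex $v=(x_{11},x_{21},\ldots,x_{k1})$ red and leave all other vertices blue; for the edge-cost I will paint the edge $e=(x_{11},x_{21},\ldots,x_{k1})(x_{12},x_{21},\ldots,x_{k1})$ red and leave all other edges blue. Since each $P_{n_i}$ has $n_i\geq 3$, every factor is prime with respect to $\Box$, and because the orders $n_i$ are pairwise distinct, the given decomposition is already the prime factor decomposition with mutually non-isomorphic factors.

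To invoke Lemma \ref{AUL} I must verify two things for each coloring: first, that every reduced-factor coloring $G_i^f$ is distinguishing on $P_{n_i}$; second, that no two of them are equivalent. Unwinding the definitions from Section~4, one finds that in the vertex case $G_i^f$ is $P_{n_i}$ with the position-$1$ endpoint carrying a color distinct from every other vertex (the red vertex lies in exactly one quotient $\check{Q}_i^{u_1}$, while all remaining $\check{Q}_i^{u_j}$ are monochromatic blue). In the edge case $G_1^f$ is $P_{n_1}$ with the pendant edge $\{x_{11},x_{12}\}$ distinguished from every other edge, while $G_i^f$ for $i\geq 2$ is $P_{n_i}$ with the position-$1$ endpoint distinguished (the red edge sits in a single $G_1$-layer passing through position $1$ in every other factor). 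In each of these configurations the only nontrivial automorphism of $P_{n_i}$ is the flip, which sends position $1$ to position $n_i$ and the pendant edge $\{1,2\}$ to $\{n_i-1,n_i\}$; since $n_i\geq 3$ these targets differ from the marked element, so the flip is broken and each $G_i^f$ is distinguishing.

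The non-equivalence of distinct reduced-factor colorings is immediate: since $n_i\neq n_j$ the underlying graphs $P_{n_i}$ and $P_{n_j}$ are non-isomorphic, and hence so are any colorings on them. Lemma \ref{AUL} then yields $\rho(G)\leq 1$ and $\rho'(G)\leq 1$. The matching lower bounds are trivial, because the flip of any single factor induces a nontrivial automorphism of $G$, so no monochromatic coloring can break all automorphisms. The only piece of bookkeeping that could go wrong is the verification that the marked element is not sent to itself by the factor's flip, and the hypothesis $n_i\geq 3$ is exactly what prevents this from happening.
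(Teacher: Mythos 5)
Your proposal is correct and follows essentially the same route as the paper: the identical one-red-vertex and one-red-edge colorings at the corner, verification that each reduced-factor coloring of $P_{n_i}$ is distinguishing, non-equivalence from the pairwise non-isomorphic path lengths, and an appeal to Lemma \ref{AUL}. The extra detail you supply in unwinding the reduced-factor colorings and checking that the flip is broken only makes explicit what the paper leaves implicit.
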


\begin{proof}
We define an edge coloring such that only the edge $(x_{11}x_{21} \dots x_{k1})(x_{12}x_{21} \dots x_{k1})$ is red, while the remaining edges of the Cartesian product are blue. We also define a vertex coloring such that $(x_{11}x_{21} \dots x_{k1})$ is the only red vertex while other vertices are all blue. These colorings are distinguishing because for every $i=1,\ldots, k$, the reduced-factor coloring of $P_{n_i}$ has only one vertex, namely $x_{i1}$, with different color from the other vertices. This guarantees that the reduced-factor colorings of $P_{n_i}$ are distinguishing. Since all paths are pairwise non-isomorphic, then there does not exist an automorphism that exchanges the factors. Therefore, the defined colorings are distinguishing by Lemma \ref{AUL}.
\end{proof}

The cost of edge-distinguishing for Cartesian powers of cycles can be treated in a similar way, as the following two theorems show. 

\begin{theorem} \label{Cnk}
Let $C_n$ be a cycle on $n$ vertices, where $n \geq 5$. If $2\leq k \leq 1 + \lfloor \frac{n}{2} \rfloor$, then $\rho ' (C_n^k) =2$. Moreover, when $t\leq \lfloor \frac{n}{2} \rfloor$ we have $\rho (C_n^t) =3$. 
\end{theorem}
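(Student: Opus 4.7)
The plan is to establish matching lower and upper bounds for $\rho'(C_n^k)$ and $\rho(C_n^t)$ by analyzing the setwise stabilizer of the red set inside $\mathrm{Aut}(C_n^k) \cong D_n \wr S_k$; by Theorem~\ref{autG} every automorphism decomposes as $(\pi;\psi_1,\ldots,\psi_k)$ with $\pi \in S_k$ and $\psi_i \in D_n$, and a $2$-coloring is distinguishing exactly when the setwise stabilizer is trivial. For the lower bounds, if a single red edge $e$ sits in factor $i$, then picking any other factor $j$ and letting $\psi_j$ be the reflection of $C_n$ through the $j$-th coordinate of $e$ produces a nontrivial automorphism fixing $e$, so $\rho'(C_n^k) \geq 2$. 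Given two red vertices $u,v$: either they share a coordinate $j$ (and the reflection of $C_n$ through $u_j=v_j$ in factor $j$, identity elsewhere, fixes both), or they differ in every coordinate (and the product of reflections of $C_n$ swapping $u_j$ with $v_j$ across all factors swaps $u$ and $v$); either way the stabilizer is nontrivial, giving $\rho(C_n^t) \geq 3$.

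For the upper bound $\rho'(C_n^k)\leq 2$, I would color the two edges in different factor directions
\[
e_1 = (0,2,0,\ldots,0)\,(1,2,0,\ldots,0), \qquad e_2 = (2,1,1,2,3,\ldots,k-2)\,(2,2,1,2,3,\ldots,k-2),
\]
and verify that no nontrivial $\varphi=(\pi;\psi_1,\ldots,\psi_k)$ preserves $\{e_1,e_2\}$. Since $\varphi$ must permute edge types, $\pi$ fixes $\{1,2\}$ setwise. In the \emph{fix case} $\pi(1)=1,\pi(2)=2$, demanding that $\psi_1$ preserve the edge $\{0,1\}$ of factor $1$ and simultaneously fix the transverse coordinate $a_2=2$ coming from $e_2$ leaves only $\psi_1=\mathrm{id}$, and symmetrically $\psi_2=\mathrm{id}$; for each $i\ge 3$ the pair-matching $(0,\pi(i){-}2)\mapsto (0,i{-}2)$ combined with $k\le 1+\lfloor n/2\rfloor$ (so that the distances $1,\ldots,k{-}2$ are pairwise distinct and strictly less than $n/2$) forces $\pi(i)=i$ and $\psi_i=\mathrm{id}$. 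In the \emph{swap case} $\pi=(1\,2)$, writing $\varphi(e_1)=e_2$ and $\varphi(e_2)=e_1$ shows that $\psi_1,\psi_2$ must be both translations or both reflections, yielding the arithmetic requirements $a_2-a_1=b_1-b_2$ or $a_2-a_1=b_2-b_1+1$; with $(a_1,b_1,a_2,b_2)=(0,2,2,1)$ these reduce to $2\equiv 1\pmod n$ and $2\equiv 0\pmod n$, both false for $n\ge 5$.

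For the upper bound $\rho(C_n^t)\leq 3$, I would color the three vertices
\[
r_1 = (0,0,\ldots,0), \qquad r_2 = (1,0,\ldots,0), \qquad r_3 = (1,2,3,\ldots,t),
\]
whose pairwise distances in $C_n^t$ are $1$, $\tfrac{t(t+1)}{2}-1$, and $\tfrac{t(t+1)}{2}$ and hence pairwise distinct (for $t\ge 2$), so any preserving automorphism fixes each vertex individually. Fixing $r_1$ forces $\psi_i(0)=0$ for all $i$, so each $\psi_i\in\{\mathrm{id},\, y\mapsto -y\}$; fixing $r_2$ then pins $\pi(1)=1$ and $\psi_1=\mathrm{id}$; finally, fixing $r_3$ yields $\psi_i(\pi(i))=i$ for each $i\ge 2$, and the bound $t\le\lfloor n/2\rfloor$ (which keeps $-j \pmod n$ outside $\{1,\ldots,t\}$ for $j\in\{1,\ldots,t\}$, apart from the boundary case $j=n/2$ that still forces $\pi(j)=j$) rules out the reflection option and gives $\pi(i)=i$, $\psi_i=\mathrm{id}$ throughout.

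The main technical hurdle is the swap-case analysis in the edge bound: carefully tracking how the factor transposition $(1\,2)$ interacts with the in-factor $D_n$-actions produces two mutually exclusive realizations of $(\psi_1,\psi_2)$ (translations versus reflections), and the asymmetric choice of the four transverse coordinates $(a_1,b_1,a_2,b_2)$ is exactly what makes both algebraic realizations fail for all $n\ge 5$.
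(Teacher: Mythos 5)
Your overall strategy---working directly with the wreath-product description of $\Aut(C_n^k)$ supplied by Theorem~\ref{autG} rather than through the reduced-factor-coloring machinery of Lemma~\ref{AUL}---is sound, and it has the merit of making explicit the lower-bound arguments that the paper only gestures at. The edge-coloring upper bound checks out: in the fix case your coordinate bookkeeping is correct, and in the swap case the contradiction can be seen even faster than via your translation/reflection dichotomy, since $\psi_1$ would have to fix the vertex $2$ of $C_n$ while carrying the edge $\{1,2\}$ onto $\{0,1\}$, which is impossible because $2\notin\{0,1\}$.

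There is, however, a genuine gap in the vertex-coloring upper bound at the boundary case $t=\lfloor n/2\rfloor$ with $n$ even. Your third red vertex $r_3=(1,2,\dots,t)$ has last coordinate $t=n/2$, the antipode of $0$ in $C_n$, and the reflection $y\mapsto -y$ of $C_n$ fixes both $0$ and $n/2$. Hence the automorphism that applies this reflection in factor $t$ and the identity everywhere else (with $\pi=\id$) fixes $r_1$, $r_2$ and $r_3$ individually and preserves your coloring: concretely, for $C_6^3$ the red set $\{(0,0,0),(1,0,0),(1,2,3)\}$ is stabilized by $(\id,\id,y\mapsto -y)$. Your parenthetical remark that the boundary case ``still forces $\pi(j)=j$'' is true but beside the point --- the obstruction is not the factor permutation but the surviving in-factor reflection, so the conclusion ``$\psi_i=\id$ throughout'' fails for $i=t$. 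The repair is easy: choose $r_3$ so that every coordinate used to pin down a factor lies strictly between $0$ and $n/2$, e.g. $r_3=(2,1,2,\dots,t-1)$, for which the three pairwise distances $1$, $1+\tfrac{t(t-1)}{2}$, $2+\tfrac{t(t-1)}{2}$ are still distinct and every $\psi_i$ is forced to be the identity. (You should also say a word about $t=1$, where your $r_2$ and $r_3$ coincide and a separate three-vertex coloring of $C_n$ is needed.)
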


\begin{proof}
Let us denote the consecutive factors $C_n^{(1)}$, $C_n^{(2)}$, \dots, $C_n^{(k)}$. We color edges $$(x_{11}x_{21}x_{31} \dots x_{k1})(x_{12}x_{21}x_{31} \dots x_{k1}),(x_{11}x_{22}x_{33} \dots x_{kk})(x_{11}x_{23}x_{33} \dots x_{kk})\in E(G)$$ by red, while the other edges are colored blue.

The reduced-factor coloring of $C_n^{(1)}$ has one edge $x_{11}x_{12}$ and one vertex $x_{11}$ in different color from the other ones. The reduced-factor coloring of $C_n^{(2)}$ has one edge $x_{22}x_{23}$ and one vertex $x_{21}$ in different color from the other ones. This two colorings are distinguishing for $C_n$ and they cannot be mapped onto each other. Furthermore, for the remaining reduced-factor colorings of $C_n^{(i)}$, the two vertices $x_{i1}$ and $x_{ii}$ receive different colors the other ones. The colorings $\check{Q_i}^{x_{i1}}$ and $\check{Q_i}^{x_{ii}}$ are equivalent. However, the isomorphism between them that preserves the colors permutes the first and the second layer. Since, this two layers are already distinguished from each other, such an isomorphism does not preserve the coloring of the Cartesian product. Therefore, we can assume that vertices $x_{i1}$ and $x_{ii}$ receive different colors. Since $k \leq 1 + \lfloor \frac{n}{2} \rfloor$, then the reduced-factor colorings $C_n^{(i)}$ are also distinguishing. The distance between the two vertices in different color from others is different for all $i \in \{3, 4, \dots, k\}$. Therefore, this colorings are pairwise non-isomorphic. It follows from the Lemma \ref{AUL} that all automorphisms of the graph $C_n^k$ are broken. Moreover, it can be easily checked that for any coloring such that only one edge is red there exists an automorphism that is not broken.

The implication about the cost for $t=1$ is clear while for $t\geq 2$ it follows from coloring vertices $$(x_{11}x_{21}x_{31} \dots x_{k1}), (x_{12}x_{21}x_{31} \dots x_{k1}),(x_{11}x_{22}x_{33} \dots x_{kk})\in V(G)$$ red while other vertices are blue. With a similar argument as above, we can easily see that when the number of factors do not exceed $\lfloor \frac{n}{2} \rfloor$ then it is a distinguishing coloring of the Cartesian product. Since $C_n^t$ is vertex-transitive, any coloring of $C_n^t$ with only two vertices colored red is not distinguishing. Hence, we are done with the proof.
\end{proof}

\begin{theorem}
Let $G = \Box_{i=1}^k C_{n_i}$ be the Cartesian product of $k$ cycles, where $k\geq 2 $ and $n_i \neq n_j$ for $i \neq j$ and $n_i \geq 5$ for all $i \in \{1, \dots, k\}$. Then $\rho ' (G) = 2$ and $\rho (G)=3$.
\end{theorem}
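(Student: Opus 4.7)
The plan is to adapt the argument of Theorem~\ref{Cnk}, taking advantage of the simplification that the factors $C_{n_i}$ now have pairwise distinct orders $\ge 5$. They are therefore pairwise relatively prime and non-isomorphic, so $C_{n_1}\Box\cdots\Box C_{n_k}$ is already the unique prime factorization, $\Aut(G)=\prod_{i=1}^{k}\Aut(C_{n_i})$ with no factor-permuting part, and the non-equivalence condition $C_{n_i}^{f}\not\simeq C_{n_j}^{f}$ of Lemma~\ref{AUL} is automatic for any coloring $f$ because the underlying cycles already have different orders.

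For the upper bound $\rho'(G)\le 2$, I would color only the two edges
\[
e_1=(x_{1,1},x_{2,1},\ldots,x_{k,1})(x_{1,2},x_{2,1},\ldots,x_{k,1}),
\]
\[
e_2=(x_{1,1},x_{2,2},x_{3,2},\ldots,x_{k,2})(x_{1,1},x_{2,3},x_{3,2},\ldots,x_{k,2})
\]
red. (I shift the ``second'' red element to $x_{i,2}$ for every $i\ge 2$ rather than to $x_{i,i}$ as in Theorem~\ref{Cnk}; this keeps the two marked vertices in each factor adjacent, hence non-antipodal, and removes the need for an auxiliary argument.) Then in the reduced-factor coloring $C_{n_i}^{f}$: for $i=1$ and $i=2$ exactly one vertex and one edge are specially colored; for $i\ge 3$ the two adjacent vertices $x_{i,1}$ and $x_{i,2}$ are specially colored with \emph{distinct} colors, because the $Q_i$-layer through $x_{i,1}$ contains the direction-$1$ red edge $e_1$, while that through $x_{i,2}$ contains the direction-$2$ red edge $e_2$, and these two colored layers are inequivalent as $n_1\ne n_2$ forces every automorphism of $Q_i$ to preserve each factor of $Q_i$. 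Since $n_i\ge 5$, the pointwise stabilizer in $\Aut(C_{n_i})$ of the marked configuration is trivial (for $i=1,2$ the unique non-identity vertex-fixing reflection fails to preserve the specially colored edge), so each $C_{n_i}^{f}$ is distinguishing. Lemma~\ref{AUL} then yields that the coloring of $G$ is distinguishing.

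For $\rho(G)\le 3$ I would color the three vertices
\[
v_1=(x_{1,1},x_{2,1},\ldots,x_{k,1}),\ v_2=(x_{1,2},x_{2,1},\ldots,x_{k,1}),\ v_3=(x_{1,1},x_{2,2},x_{3,2},\ldots,x_{k,2})
\]
red and run the same reduced-factor analysis, with ``red vertex in the layer'' playing the role of ``red edge in the layer''; the inequivalence of the two marks at $x_{i,1}$ and $x_{i,2}$ for $i\ge 3$ is again forced by $n_1\ne n_2$ via the same prime-factorization argument.

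For the matching lower bounds I would use $\Aut(G)=\prod_i\Aut(C_{n_i})$. With a single red edge $e$ in the $i$th factor direction, the reflection in any $\Aut(C_{n_j})$ with $j\ne i$ through the projection $p_j(e)$, extended by identities on the other coordinates, is a non-trivial color-preserving automorphism, so $\rho'(G)\ge 2$. For $\rho(G)\ge 3$, one red vertex has the non-trivial stabilizer $\prod_i \operatorname{Stab}(v_i)$; for two red vertices $u\ne v$, either $u_i=v_i$ for some $i$, in which case the reflection of $C_{n_i}$ through $u_i$ (identity elsewhere) is a non-identity automorphism fixing both, or $u_i\ne v_i$ for every $i$, in which case a reflection $\sigma_i\in\Aut(C_{n_i})$ swapping $u_i$ and $v_i$ exists in every factor (such a reflection always exists for any two distinct vertices of a cycle) and the product $(\sigma_1,\ldots,\sigma_k)$ is a non-identity automorphism swapping $u$ and $v$.

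The main delicate step is the inequivalence of the two marks $x_{i,1}$ and $x_{i,2}$ in $C_{n_i}^{f}$ for $i\ge 3$: this is where the pairwise distinctness of the $n_j$ genuinely enters, via unique prime factorization of $Q_i$ to rule out any isomorphism of $Q_i$ that permutes a direction-$1$ red edge or vertex configuration with a direction-$2$ one.
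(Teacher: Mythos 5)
Your proposal is correct and follows essentially the same route as the paper: the same choice of two red edges (respectively three red vertices) up to the placement of the second mark, the same appeal to Lemma~\ref{AUL} via reduced-factor colorings, and the same style of lower bound. The only differences are improvements in detail: placing the second mark at $x_{i,2}$ instead of $x_{i,i}$ and justifying the inequivalence of the two marked layers by unique prime factorization of $Q_i$ (a direction-$1$ red edge cannot be carried to a direction-$2$ one when all $n_j$ are distinct) cleanly replaces the step the paper handles by referring back to the argument of Theorem~\ref{Cnk}, and your explicit reflection arguments (a reflection fixing a single red edge pointwise for $\rho'\ge 2$; a reflection fixing or swapping two red vertices for $\rho\ge 3$) supply the details the paper dismisses as ``easy to see'' or attributes to vertex-transitivity.
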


\begin{proof}
We define an edge-coloring that colors the edges $$(x_{11}x_{21}x_{31} \dots x_{k1})(x_{12}x_{21}x_{31} \dots x_{k1}),(x_{11}x_{22}x_{33} \dots x_{kk})(x_{11}x_{23}x_{33} \dots x_{kk})\in E(G)$$ red while the other ones are blue. Then the reduced-factor coloring of $C_{n_1}$ has only one edge $x_{11}x_{12}$ and only one vertex $x_{11}$ in different color to the other ones. The reduced-factor coloring of $C_{n_2}$ has also only one edge $x_{22}x_{23}$ and one vertex $x_{21}$ in different color to others. Therefore, in both cases the automorphisms of the cycle are broken. Moreover, all remaining reduced-factor colorings of $C_{n_i}$ have only two vertices $x_{i1}$ and $x_{ii}$ whose colors are different to all other vertices. This colors can be assumed to be different to each other because of the same reasoning as presented in the proof of Theorem \ref{Cnk}. This brings us to a conclusion that all of the possible automorphisms of the Cartesian product are broken. It is easy to see that a single red edge is not enough to break all of the automorphisms of the Cartesian product.

Moreover, we define a vertex coloring that makes $$(x_{11}x_{21}x_{31} \dots x_{k1}), (x_{12}x_{21}x_{31} \dots x_{k1}),(x_{11}x_{22}x_{33} \dots x_{kk})\in V(G)$$ red and leaves the others blue. With the same argument as above, this is a distinguishing coloring for $G$. Since $G$ is vertex-transitive, any $2$-coloring with only two red vertices cannot be a distinguishing one.
\end{proof}

One might ask what is the asymptotic behavior of the edge-cost of the Cartesian power when the number of factors tends to infinity? Can we find a graph $G$ such that $\lim \sup \rho' (G^ n )$ is bounded from above when $n$ tends to infinity? Using the following theorem we answer these questions. 

\begin{theorem}
Let $Q_k$ be the $k$-dimensional hypercube with $k \geq 3$. Then $\sup_k \rho ' (Q_k) = \infty$.
\end{theorem}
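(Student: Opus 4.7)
The plan is to reduce the problem to a known unboundedness statement for the determining number of $Q_k$, using the natural passage from the red-edge set to its set of endpoints.

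First, fix any distinguishing $2$-edge-coloring of $Q_k$ attaining the minimum: let $R$ be the set of red edges, $r := |R| = \rho'(Q_k)$, and let $V(R)$ be the set of endpoints of edges in $R$. The trivial bound $|V(R)|\leq 2r$ will be the only quantitative link back to $\rho'(Q_k)$ that we need.

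The heart of the argument is the claim that $V(R)$ is a determining set of $Q_k$. Indeed, if $\varphi \in \Aut(Q_k)$ satisfies $\varphi(v)=v$ for every $v\in V(R)$, then $\varphi$ fixes each edge $uv\in R$ setwise, so $\varphi$ preserves the partition of $E(Q_k)$ into red and blue edges. Since our coloring is distinguishing, this forces $\varphi=\id$; hence $\mathrm{Stab}(V(R))=\{\id\}$, and by the proposition of Boutin cited in the preliminaries, $V(R)$ is indeed a determining set. In particular $|V(R)|\geq \mathrm{Det}(Q_k)$.

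It then suffices to invoke Boutin's classical lower bound $\mathrm{Det}(Q_k) \geq \lceil \log_2 k \rceil + 1$, which is precisely the fact underlying the bound on $\rho(Q_k)$ quoted in the introduction. Combining the inequalities yields
$$ 2\,\rho'(Q_k) \;\geq\; |V(R)| \;\geq\; \mathrm{Det}(Q_k) \;\geq\; \lceil \log_2 k \rceil + 1, $$
and the right-hand side tends to infinity with $k$, whence $\sup_k \rho'(Q_k)=\infty$. The only nontrivial external ingredient is the growth of $\mathrm{Det}(Q_k)$ with $k$; if one prefers a self-contained argument, a weaker but sufficient inequality $\mathrm{Det}(Q_k)\geq \log_2 k - O(1)$ can be extracted directly from an orbit-stabilizer count using $|\Aut(Q_k)|=k!\cdot 2^k$, which is already enough to give $\rho'(Q_k)\to\infty$.
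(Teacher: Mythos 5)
Your proof is correct, but it takes a genuinely different route from the paper. The paper argues by contradiction with a pigeonhole argument on coordinates: assuming $\sup_k \rho'(Q_k)=n$, it takes $k$ large (namely $k=n+2^n+1$), observes that the at most $2n$ endpoints of red edges must agree on two of the unused coordinate directions, and exhibits the corresponding transposition of layers as an unbroken automorphism. You instead prove the clean structural fact that the endpoint set $V(R)$ of the minority color class of any distinguishing edge-coloring is a determining set (an automorphism fixing $V(R)$ pointwise fixes every red edge, hence preserves the coloring), which yields the quantitative inequality $\rho'(Q_k)\geq \tfrac{1}{2}\mathrm{Det}(Q_k)\geq \tfrac{1}{2}(\lceil \log_2 k\rceil+1)$. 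This is strictly more information than the paper's qualitative statement, and it pairs nicely with the paper's later upper bound $\rho'(Q_n)\leq 4\lceil\log_2 n\rceil-2$ to show $\rho'(Q_n)=\Theta(\log n)$; moreover it leans only on Boutin's theorem $\mathrm{Det}(Q_n)=\lceil\log_2 n\rceil+1$, which the paper already imports for its upper bounds, or on the elementary orbit-counting bound you sketch. What the paper's approach buys in exchange is that it transfers verbatim to the subsequent corollary $\sup_k\rho'(G^k)=\infty$ for arbitrary connected $G$ without needing any lower bound on $\mathrm{Det}(G^k)$ (though your reduction to determining sets would also generalize with a little extra work). One small point worth making explicit in your write-up: the stabilizer in Boutin's characterization is the \emph{pointwise} stabilizer, which is exactly what your argument controls, so the application is legitimate.
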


\begin{proof}
We denote the set of the vertices of the hypercube by the sequences of length $k$ with terms from the set $\{0,1\}$.
$$V(Q_k) = \{(a_1 a_2 \dots a_k) \colon a_i \in \{0,1\},\, i\in \{1, \dots, k\}\}$$
The edges are between two vertices differing in exactly one coordinate of the sequences.

Suppose that the cost of edge-coloring of the $k$-dimensional hypercube for all $k$ is bounded from above, i.e., $\sup_k \rho ' (Q_k) = n$ for some integer $n$. Then there exists an edge-coloring of $Q_k$ with exactly $n$ edges red (remaining edges are blue). Assume first that all of the red edges are in different $K_2$-layers. Let $k = n + 2^n + 1$. Consider all of the vertices of the $Q_k$ that are incident to red edges. At least two of their coordinates corresponding to the last $2^n + 1$ layers are exactly the same. Assume that they correspond to the $K_2^i$-layer and $K_2^j$-layer. Then an automorphism generated by a permutation $\pi =(ij) \in S_k$ preserves this coloring. Which is a contradiction.
\end{proof}

We may apply the same arguments to the Cartesian powers of any graph $G$. If $\sup_k \rho'(G^k) < \infty $, then for sufficiently large $k$ we find that two (or more) of the coordinates of all the vertices adjacent to red edges are exactly the same. This allows us to permute layers in which those vertices have the same coordinate.

\begin{corollary}
For any connected finite graph $G$ the edge-cost of the $n$-th Cartesian power of $G$ grows with the number of factors and tends to infinity, i. e.,
$$\sup_k \rho ' (G^k) = \infty.$$
\end{corollary}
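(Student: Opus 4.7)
The plan is to argue by contradiction, directly generalising the hypercube argument in the preceding theorem. Assume $G$ is a connected finite graph with $|V(G)|\ge 2$ (otherwise $G^k$ has no edges and $\rho'(G^k)$ is vacuous), and suppose for contradiction that $\sup_k \rho'(G^k)=n<\infty$. Then for every $k$ there is a distinguishing $2$-edge-coloring of $G^k$ using at most $n$ red edges. Each red edge lies in a unique $G$-layer determined by one of the $k$ coordinate directions, so at most $n$ directions are ``used'' by red edges and at least $k-n$ directions remain ``free''. Let $R\subseteq V(G^k)$ be the set of endpoints of red edges; then $|R|\le 2n$.

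The second step is a pigeonhole argument on the restricted projections $p_i|_R\colon R\to V(G)$, where $i$ runs over the free directions. There are at most $|V(G)|^{|R|}\le |V(G)|^{2n}$ distinct such functions. Consequently, as soon as $k>n+|V(G)|^{2n}$, two free directions $i\neq j$ must yield $p_i|_R=p_j|_R$; equivalently, every vertex in $R$ has identical $i$-th and $j$-th coordinates.

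Finally, I would construct the contradicting automorphism. Let $\varphi\colon G^k\to G^k$ swap the $i$-th and $j$-th coordinates. Since the $i$-th and $j$-th factors are both copies of $G$, Theorem \ref{autG} applied to the prime factorization of $G^k$ guarantees that $\varphi\in\Aut(G^k)$. The map $\varphi$ is non-trivial because $|V(G)|\ge 2$ permits a vertex $w$ with $w_i\neq w_j$. However, every $v\in R$ satisfies $v_i=v_j$, so $\varphi(v)=v$; in particular every red edge, whose endpoints lie in $R$, is fixed by $\varphi$. Thus $\varphi$ is a non-trivial color-preserving automorphism, contradicting the distinguishing property of the coloring and completing the argument.

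The main obstacle is justifying that the coordinate swap is genuinely an automorphism of $G^k$ when $G$ itself is not prime: if $G=\Box_{\ell} H_\ell^{a_\ell}$ is the prime factorization, then $G^k=\Box_\ell H_\ell^{ka_\ell}$, and the swap of the $i$-th and $j$-th copies of $G$ corresponds to interchanging the blocks of $a_\ell$ copies of $H_\ell$ coordinate-by-coordinate for each $\ell$, which is a legitimate permutation of prime factors in Theorem \ref{autG}. Once this is in place, the pigeonhole bookkeeping $k>n+|V(G)|^{2n}$ is a routine generalisation of the case $G=K_2$ treated in the previous theorem.
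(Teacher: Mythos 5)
Your proposal is correct and follows essentially the same route as the paper, which obtains the corollary by repeating the pigeonhole-plus-coordinate-swap argument from the preceding hypercube theorem. If anything, your write-up is more careful than the paper's one-line sketch: you make the counting bound explicit and you justify that swapping two copies of a non-prime $G$ is still a legitimate permutation of prime factors in the sense of Theorem \ref{autG}.
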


Cartesian powers of $K_2$ need special attention. One reason is that we cannot break automorphisms of $K_2$ by edge coloring. 

\begin{lemma}
$\rho'(Q_3)=\rho'(Q_4)=3$.
\end{lemma}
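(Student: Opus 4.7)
The plan is to establish $\rho'(Q_3) = \rho'(Q_4) = 3$ by matching lower and upper bounds. Identify $V(Q_k)$ with $\{0,1\}^k$ and call the \emph{direction} of an edge the unique coordinate $d \in \{1,\dots,k\}$ in which its endpoints differ. By the standard description of $\Aut(Q_k)$, every automorphism has the form $\varphi(a_1,\dots,a_k) = (\sigma_1(a_{\pi(1)}),\dots,\sigma_k(a_{\pi(k)}))$ for some $\pi \in S_k$ and coordinate flips $\sigma_i \in \Aut(K_2)$.

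To prove the lower bound $\rho'(Q_k) \geq 3$ for $k \in \{3,4\}$, I would show that any $2$-coloring with exactly two red edges admits a non-identity color-preserving automorphism, splitting into two cases. If both red edges share a direction $d$, the automorphism that flips only the $d$-th coordinate is non-identity and preserves every direction-$d$ edge by swapping its endpoints, so the coloring is preserved. If the two red edges $e_1, e_2$ lie in distinct directions $d_1 \neq d_2$, I would seek $\varphi$ with $\pi = (d_1\;d_2)$ together with an appropriate choice of flips $\sigma_i$ swapping $e_1 \leftrightarrow e_2$; writing out the equations $\varphi(e_1) = e_2$ and $\varphi(e_2) = e_1$ coordinate by coordinate, each $\sigma_i$ is forced to be $\id$ or the flip according as the corresponding coordinates of $e_1$ and $e_2$ agree or disagree, and the two constraints on each $\sigma_i$ are always mutually compatible. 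The resulting $\varphi$ is non-identity since $\pi$ is a transposition.

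For the upper bound I would exhibit explicit three-edge distinguishing colorings. For $Q_3$, color the edges $\{(0,0,0),(1,0,0)\}$, $\{(0,0,0),(0,1,0)\}$, and $\{(0,0,1),(1,0,1)\}$ red. Any color-preserving $\varphi$ must preserve the direction-counts $(2,1,0)$, which forces $\pi = \id$; moreover $(0,0,0)$ is the unique vertex of red-degree $2$, so $\varphi$ fixes the origin, forcing every $\sigma_i = \id$ and hence $\varphi = \id$. For $Q_4$, I would use the analogous coloring with edges $\{(0,0,0,0),(1,0,0,0)\}$, $\{(0,0,0,0),(0,1,0,0)\}$, and $\{(0,0,1,0),(1,0,1,0)\}$: direction-counts $(2,1,0,0)$ and the unique degree-$2$ red vertex $(0,0,0,0)$ force $\pi(1)=1$, $\pi(2)=2$ and all $\sigma_i = \id$, so only a possible swap of directions $3$ and $4$ remains; tracking the image of $(0,0,1,0)$ under such a swap (it goes to $(0,0,0,1)$, which is not in the red subgraph) rules this out, giving $\pi = \id$ and $\varphi = \id$.

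The main obstacle is the coordinate-by-coordinate consistency step in the non-parallel case of the lower bound: one must verify that for every pair $e_1, e_2$ of red edges in distinct directions, the equations on the coordinate flips $\sigma_i$ arising from $\varphi(e_1) = e_2$ and $\varphi(e_2) = e_1$ always admit a simultaneous solution. Once this routine verification is in place, the rest of the argument reduces to elementary structural analysis of the red subgraph using its direction-count profile and its degree sequence.
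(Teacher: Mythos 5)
Your proof is correct, but it takes a genuinely different route from the paper's on both halves. For the upper bound the paper exhibits three red edges and verifies distinguishability through its reduced-factor coloring machinery (Lemma \ref{AUL}), whereas you argue directly from the description of $\Aut(Q_k)$ as coordinate permutations composed with coordinate flips, using the number of red edges per direction to pin down the permutation and the unique red-degree-two vertex to kill the flips; both colorings work and the arguments are of comparable length. The more substantial difference is in the lower bound: the paper disposes of it in one sentence by appealing to edge-transitivity of $Q_3$ (and ``a similar argument'' for $Q_4$), which as stated is incomplete, since edge-transitivity only provides some automorphism carrying one red edge to the other, not necessarily one preserving the two-element red set. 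Your two-case argument supplies exactly the verification the paper omits: parallel red edges are each fixed by flipping their common direction, and for non-parallel red edges $e_1,e_2$ in directions $d_1\neq d_2$ the coordinate-wise constraints from $\varphi(e_1)=e_2$ and $\varphi(e_2)=e_1$ under the transposition $(d_1\,d_2)$ are always simultaneously satisfiable, because each $\sigma_i$ with $i\notin\{d_1,d_2\}$ receives the same constraint twice while $\sigma_{d_1}$ and $\sigma_{d_2}$ each receive only one; I checked this and it goes through, including when the two red edges share a vertex. The only cosmetic omission is that you should also note that colorings with zero or one red edge are trivially non-distinguishing, so that the smaller color class of any distinguishing edge coloring indeed has size at least three.
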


\begin{proof}
In both cases we define a $2$-coloring with $3$ red edges that is distinguishing. In the case of $Q_3$ the red edges are $(0,0,0)(1,0,0)$, $(1,0,0)(1,1,0)$ and $(0,1,1)(1,1,1)$, as shown in Figure \ref{Q3}. It is easy to see that the reduced-factor colorings of all three factors are distinguishing and they are pairwise non-equivalent. Therefore, by Lemma \ref{AUL}, presented coloring of $Q_3$ is distinguishing. Moreover, since $Q_3$ is edge-transitive, for any coloring with two red edges there exists a non-trivial automorphism of $Q_3$ that maps one to another.

\begin{figure}[h!]
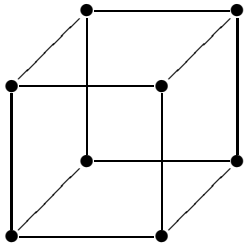
 \label{Q3}
	\[ \xygraph{
		!{<0cm,0cm>; <1cm,0cm>:<0cm,1cm>::}
		!{(1,3)}*{\bullet}="v13" !{(3,3)}*{\bullet}="v33"
		!{(0,2)}*{\bullet}="v02" !{(2,2)}*{\bullet}="v22" 
		!{(1,1)}*{\bullet}="v11" !{(3,1)}*{\bullet}="v31"
		!{(0,0)}*{\bullet}="v00" !{(2,0)}*{\bullet}="v20" 
		"v00"-@[red] "v20" "v00"-@[blue] "v11" "v00"-@[blue] "v02" 
		"v20"-@[red] "v22" "v20"-@[blue] "v31"
		"v11"-@[blue] "v31" "v11"-@[blue] "v13"
		"v02"-@[blue] "v13" "v02"-@[blue] "v22"
		"v33"-@[blue] "v31" "v33"-@[red] "v13" "v33"-@[blue] "v22"
	} \]
	\caption{A distinguishing coloring of $Q_3$}
\end{figure}

In case of $Q_4$ we color edges $(0,0,0,1)(1,0,0,0)$, $(0,0,0,0)(0,1,0,0)$ and $(0,0,0,1)(0,1,0,1)$ red. With a similar argument as previously it can be shown that this coloring is distinguishing and any coloring with only two red edges is not.  

\begin{figure}[h!]
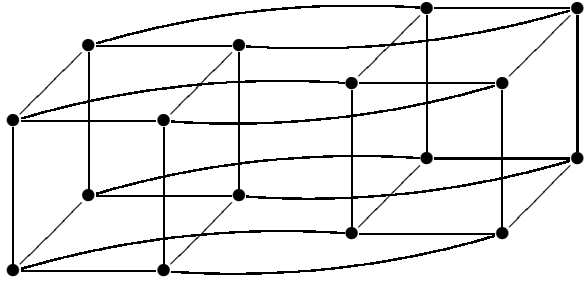
 \label{Q4}
	\[ \xygraph{
		!{<0cm,0cm>; <1cm,0cm>:<0cm,1cm>::}
		!{(1,3)}*{\bullet}="v13" !{(3,3)}*{\bullet}="v33"
		!{(0,2)}*{\bullet}="v02" !{(2,2)}*{\bullet}="v22"
		!{(1,1)}*{\bullet}="v11" !{(3,1)}*{\bullet}="v31"
		!{(0,0)}*{\bullet}="v00" !{(2,0)}*{\bullet}="v20" 
		!{(5.5,3.5)}*{\bullet}="v64" !{(7.5,3.5)}*{\bullet}="v84"
		!{(4.5,2.5)}*{\bullet}="v53" !{(6.5,2.5)}*{\bullet}="v73"
		!{(5.5,1.5)}*{\bullet}="v62" !{(7.5,1.5)}*{\bullet}="v82"
		!{(4.5,0.5)}*{\bullet}="v51" !{(6.5,0.5)}*{\bullet}="v71" 
		"v00"-@[red] "v20" "v00"-@[blue] "v11" "v00"-@[blue] "v02" 
		"v20"-@[red] "v22" "v20"-@[blue] "v31"
		"v11"-@[blue] "v31" "v11"-@[blue] "v13"
		"v02"-@[blue] "v13" "v02"-@[blue] "v22"
		"v33"-@[blue] "v31" "v33"-@[blue] "v13" "v33"-@[blue] "v22"
		"v51"-@[blue] "v71" "v51"-@[blue] "v62" "v51"-@[blue] "v53" 
		"v71"-@[red] "v73" "v71"-@[blue] "v82"
		"v62"-@[blue] "v82" "v62"-@[blue] "v64"
		"v53"-@[blue] "v64" "v53"-@[blue] "v73"
		"v84"-@[blue] "v82" "v84"-@[blue] "v64" "v84"-@[blue] "v73"
		"v00"-@[blue]@/^/ "v51" "v20"-@[blue]@/_/ "v71"
		"v11"-@[blue]@/^/ "v62" "v31"-@[blue]@/_/ "v82"
		"v02"-@[blue]@/^/ "v53" "v22"-@[blue]@/_/ "v73"
		"v13"-@[blue]@/^/ "v64" "v33"-@[blue]@/_/ "v84"
	} \]
	\caption{A distinguishing coloring of $Q_4$}
\end{figure}

\end{proof}

To proove a general upper bound on the cost of edge-distinguishing of hypercubes, we use the following theorem by Boutin \cite{b1}.

\begin{theorem} \cite{b1}
$Det(Q_n)= \lceil \log_2 n \rceil+1$.
\end{theorem}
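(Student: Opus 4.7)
The plan is to prove matching upper and lower bounds on the size of a smallest determining set of $Q_n$, using the cited proposition that $S$ is determining if and only if $\mathrm{Stab}(S)=\{\id\}$ in $\Aut(Q_n)$. The starting point is the standard description of $\Aut(Q_n)$ as the group of signed coordinate permutations: every automorphism has the form $(\pi,c)\colon x \mapsto (x_{\pi^{-1}(1)}\oplus c_1,\ldots,x_{\pi^{-1}(n)}\oplus c_n)$ for some $\pi\in S_n$ and $c\in\{0,1\}^n$.

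For a candidate set $S=\{u_1,\ldots,u_m\}$, I would attach to each coordinate $j\in\{1,\ldots,n\}$ the \emph{column} $\mathrm{col}(j)=(u_1(j),\ldots,u_m(j))\in\{0,1\}^m$. Unpacking the equations $(\pi,c)\cdot u_i=u_i$ for every $i$ gives $u_i(\pi^{-1}(j))=u_i(j)\oplus c_j$; since the right-hand side is independent of $i$, this yields the dichotomy: for each $j$, either $c_j=0$ and $\mathrm{col}(\pi^{-1}(j))=\mathrm{col}(j)$, or $c_j=1$ and $\mathrm{col}(\pi^{-1}(j))$ is the bitwise complement $\overline{\mathrm{col}(j)}$. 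Hence $\mathrm{Stab}(S)=\{\id\}$ exactly when the columns $\mathrm{col}(1),\ldots,\mathrm{col}(n)$ are pairwise distinct \emph{and} no two of them are complementary.

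For the upper bound, I would set $m=\lceil\log_2 n\rceil+1$ and take $u_1=\mathbf{0}$; then label the $n$ coordinates by distinct strings in $\{0,1\}^{m-1}$ (possible since $2^{m-1}\ge n$) and define $u_i(j)$, for $i\ge 2$, to be the $(i-1)$-st bit of the label of $j$. Every column then begins with $0$, so none is the complement of another, and distinct labels give distinct columns; the resulting $S$ has size $m$ and is determining.

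For the lower bound, the $2^m$ elements of $\{0,1\}^m$ split into $2^{m-1}$ complementary pairs, and the characterization permits at most one column per pair, forcing $n\le 2^{m-1}$ and hence $m\ge\lceil\log_2 n\rceil+1$. The main subtlety—the step I expect to need the most care—is the consistency of $c$ in the characterization: a single vector $c\in\{0,1\}^n$ must fix every $u_i$ simultaneously, and it is precisely this coupling across all rows that produces the clean two-case dichotomy on columns, rather than allowing $c_j$ to depend on $i$.
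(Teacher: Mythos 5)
Your proof is correct, but note that the paper does not prove this statement at all: it is quoted verbatim from Boutin's article \cite{b1} and used as a black box (what the authors actually need later is the explicit determining set from Boutin's proof and the pairwise distances between its vertices). Your argument is a legitimate self-contained reconstruction, and it is essentially Boutin's own: she encodes a vertex subset $S$ of a Cartesian product by its ``characteristic matrix'' and characterizes triviality of $\mathrm{Stab}(S)$ by conditions on the columns, which for $Q_n=K_2^{\,n}$ collapse to exactly your dichotomy (columns pairwise distinct and no two complementary), since $\Aut(Q_n)$ is the group of signed coordinate permutations --- a fact that in this paper would follow from Theorem 2.1 applied to the prime factorization $K_2\cp\cdots\cp K_2$. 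Your counting of complementary pairs gives the lower bound and your all-zeros-first-row construction gives the matching upper bound; both are sound (a vector of $\{0,1\}^m$ is never self-complementary, so the pair count is exact, and $2^{m-1}\ge n$ guarantees enough labels). The only point you leave implicit is the converse half of the characterization --- that a repeated or complementary pair of columns actually yields a nontrivial color-preserving automorphism (a coordinate transposition, with bit flips in the complementary case) --- which is needed for the lower bound; it is a one-line construction, so this is a presentational gap rather than a mathematical one.
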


In the proof of the above theorem, in the case $n=2^r$, Boutin explicitly defined a determining set of size $r+1$. It is easy to calculate that the distances between any two vertices of that set are equal to $2^{r-1}$. In the case of an arbitrary $n \geq 3$, the distances are not smaller than $2^{\lceil \log_2 n \rceil-2}$.

\begin{theorem}
Let $n \geq 5$, then $\rho '(Q_n) \leq \frac{\lceil \log_2 n \rceil + 4}{2} (\lceil \log_2 n \rceil + 1)$.
\end{theorem}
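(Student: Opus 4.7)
The plan is to invoke Boutin's determining-set bound and attach a distinct red star to each vertex of that determining set, so that red degrees alone identify every center and hence force any color-preserving automorphism to be trivial.

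Set $r = \lceil \log_2 n \rceil$, and let $S = \{s_0, s_1, \ldots, s_r\}$ be Boutin's determining set of $Q_n$ with pairwise distances at least $2^{r-2}$. For each $i \in \{0, 1, \ldots, r\}$, select $i+2$ neighbors of $s_i$ in $Q_n$ and color the $i+2$ edges joining them to $s_i$ red, so that these edges form a star $H_i \cong K_{1, i+2}$; color every remaining edge of $Q_n$ blue. The resulting number of red edges is
\[
\sum_{i=0}^{r}(i+2) \;=\; \frac{(r+1)(r+4)}{2},
\]
exactly the claimed upper bound.

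To verify that this coloring distinguishes $Q_n$, observe that in the red subgraph the vertex $s_i$ has degree $i+2$; the numbers $2, 3, \ldots, r+2$ are pairwise distinct, and every other vertex incident to a red edge is a leaf of some $H_j$ with red degree $1$. Hence any color-preserving automorphism $\varphi$ of $Q_n$ must send the unique vertex of red degree $i+2$ to itself for every $i$, i.e.\ $\varphi(s_i)=s_i$. Thus $\varphi$ fixes $S$ pointwise, and the determining-set property forces $\varphi = \id$, giving $\rho'(Q_n) \leq \frac{(r+1)(r+4)}{2}$.

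The main obstacle is ensuring that the stars $H_0, \ldots, H_r$ can be chosen vertex-disjoint inside $Q_n$. Two stars can overlap only at a common neighbor of their centers, and two vertices of $Q_n$ share common neighbors only when they lie at Hamming distance exactly $2$, in which case they share exactly two such neighbors. For $r \geq 4$ the determining-set bound $2^{r-2} \geq 4 > 2$ automatically rules this out. For $r = 3$ (that is, $5 \leq n \leq 8$) a small case analysis is required: one checks that after excluding the at most $2r$ forbidden common neighbors shared with other $s_j$, each $s_i$ still has enough of its $n$ neighbors available to accommodate its $i+2 \leq r+2$ chosen leaves. This is the step in which the bookkeeping has to be done with care, possibly by exploiting the explicit structure of Boutin's set or by slightly perturbing the choice of $S$.
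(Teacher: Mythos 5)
Your proposal is essentially the paper's own proof: both attach to the vertices of Boutin's determining set $S$ (of size $\lceil \log_2 n \rceil + 1$) red stars of pairwise distinct sizes $2, 3, \ldots, \lceil \log_2 n \rceil + 2$, so that red degrees force any color-preserving automorphism to fix $S$ pointwise, and $\mathrm{Stab}(S)=\{\id\}$ finishes the argument; your edge count $\frac{(r+1)(r+4)}{2}$ agrees exactly with the stated bound. The one loose end you flag --- that for $5 \le n \le 8$ Boutin's distance guarantee is only $2^{\lceil \log_2 n\rceil - 2} = 2$, so the stars' leaves need a small extra check to avoid a non-center acquiring red degree $2$ --- is a genuine subtlety, but the paper's proof does not resolve it either (it simply asserts the distances exceed two), so your treatment is, if anything, the more careful of the two.
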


\begin{proof}
We consider the determining set $S =\{v_1, v_2, \dots, v_r\}$, where $r=\lceil \log_2 n \rceil  +1$, like in the proof of the theorem by Boutin. For each $i \in \{1,2, \dots , r\}$ we color $i+1$ edges incident with the vertex $v_i$ red . The remaining edges of the hypercube we color blue. We know that the vertices of the set $S$ are at distance greater than two from each other. Therefore, the red edges are adjecent only in vertices from the set $S$ and every vertex from the set $V(Q_n) \setminus S$ is incident with at most one red edge. 

Let us consider an automorphism $\varphi \in \Aut(Q_n)$ that preserves this coloring. Every vertex of the set $S$ has to be fixed by such an automorphism. Therefore, $\varphi \in Stab(S)$. Since $S$ is a detrmining set of the hypercube, $Stab(S) = \{\id\}$ and $\varphi = \id$. The number of edges colored red is equal to precisely $ \frac{\lceil \log_2 n \rceil + 4}{2} (\lceil \log_2 n \rceil + 1)$.
\end{proof}

We improve this bound further using the following result of Boutin \cite{b}.

\begin{theorem} \cite{b}
If $n \geq 5$, then $Q_n$ has a distinguishing class of size $2 \lceil \log_2 n \rceil -1$.
\end{theorem}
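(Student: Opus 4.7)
The plan is to identify $V(Q_n) = \{0,1\}^n$ and $\Aut(Q_n) = S_n \ltimes (\mathbb{Z}/2)^n$, where $S_n$ permutes coordinates and $(\mathbb{Z}/2)^n$ flips them. A $2$-coloring with classes $(S, V \setminus S)$ is distinguishing iff $\mathrm{Stab}(S) = \{\id\}$ in $\Aut(Q_n)$, so the goal reduces to constructing $S \subseteq \{0,1\}^n$ with $|S| = 2r-1$ and trivial setwise stabilizer, where $r = \lceil \log_2 n \rceil$.

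I would present $S$ as the row set of a $(2r-1) \times n$ binary matrix $M$. Under this encoding, a coordinate permutation $\pi \in S_n$ permutes the columns of $M$ and a bit-flip $\tau \in (\mathbb{Z}/2)^n$ complements those columns where $\tau_j = 1$; the pair $(\pi,\tau)$ lies in $\mathrm{Stab}(S)$ iff the modified matrix has the same row set as $M$, i.e.\ iff some row permutation recovers $M$. My design constraints on $M$ would be: (a) the $n$ columns are pairwise distinct elements of $\{0,1\}^{2r-1}$, which is feasible since $n \le 2^r \le 2^{2r-1}$; (b) no nontrivial combination of a row-permutation with a column-complementation preserves the column multiset; (c) the $2r-1$ rows are pairwise distinct elements of $\{0,1\}^n$, so that $|S| = 2r-1$.

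A concrete choice for the columns is the zero string, the first $r-1$ standard basis vectors $e_1,\ldots,e_{r-1}$, and then $n-r$ further vectors of weight at least $2$ picked greedily so that the column multiset is rigid, in particular making the zero column unique and ensuring that no column is the complement of another. Once $M$ is in hand, the verification $\mathrm{Stab}(S) = \{\id\}$ is a short two-step check: uniqueness of the zero column forces $\tau = 0$, after which column-distinctness forces $\pi = \id$. The main obstacle is fitting all three requirements within the tight budget of only $2r-1$ rows in the extremal case $n = 2^r$, where the pool of low-weight length-$(2r-1)$ vectors is barely large enough. Boutin's proof makes the construction explicit via a case split on the binary expansion of $n$, and I would follow that case split, verifying the $n = 2^r$ and $2^{r-1} < n < 2^r$ branches separately and concluding by counting rows that $|S| = 2r-1$.
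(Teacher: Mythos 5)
This statement is not proved in the paper at all: it is quoted verbatim from Boutin's article \cite{b} and used as a black box (the authors only rely on the explicit vertex set and the pairwise distances from that reference in the theorem that follows). So there is no in-paper proof to compare yours against; what can be assessed is whether your sketch would stand on its own.

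Your framework is the right one and is essentially the standard route: identify $\Aut(Q_n)$ with $S_n \ltimes (\mathbb{Z}/2)^n$, encode the candidate class $S$ as a $(2r-1)\times n$ binary matrix $M$, and reduce to showing the setwise stabilizer is trivial. But the verification you propose has a concrete gap. Membership of $(\pi,\tau)$ in $\mathrm{Stab}(S)$ means the transformed matrix agrees with $M$ \emph{after some row permutation} $\sigma$, and $\sigma$ is not required to be trivial. Your weight argument for forcing $\tau=0$ survives this (column weights are $\sigma$-invariant, and if all columns of $M$ have weight at most $r-1 < (2r-1)/2$ then no column can be complemented), but the second step, ``column-distinctness forces $\pi=\id$,'' does not: with $\tau=0$ one needs that no nontrivial $\sigma\in S_{2r-1}$ permutes the column multiset, and distinct columns do not preclude a nontrivial $\sigma$ being compensated by a nontrivial $\pi$ (the identity-like matrix $M=I$ has all columns distinct yet satisfies $P_\sigma I P_{\sigma^{-1}}=I$ for every $\sigma$). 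Killing these combined row/column symmetries is exactly where the work lies, and your prescription for the remaining $n-r$ columns (``picked greedily so that the column multiset is rigid'') asserts rather than establishes that such a choice exists within the budget of $2r-1$ rows, particularly in the extremal case $n=2^r$. Deferring to ``Boutin's case split'' at that point means the argument is a plan for reading \cite{b}, not an independent proof.
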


Again, Boutin \cite{b} presents a subset of the vertices of the hypercube that form this distinguishing class, together with the distances between them. We will make use of this particular set of vertices in the proof of the following theorem.lem 

\begin{theorem}
If $n \geq 5$, then $\rho '(Q_n) \leq 4 \lceil \log_2 n \rceil - 2$.
\end{theorem}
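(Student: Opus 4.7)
The plan is to leverage Boutin's distinguishing class $S=\{v_1,\ldots,v_{2\lceil \log_2 n\rceil-1}\}$ from the preceding theorem and encode the vertex-coloring ``$S$ red, complement blue'' into an edge-coloring by attaching exactly two red edges at each vertex of $S$. This uses $2|S|=4\lceil\log_2 n\rceil-2$ red edges, matching the stated bound.

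First I would invoke Boutin's explicit construction of $S$ from \cite{b}, which also records the pairwise Hamming distances between the $v_i$'s. Using this data---and, if needed, a minor perturbation of the chosen representatives, which will not disturb the distinguishing-class property---I would arrange that any two vertices of $S$ lie at distance at least $3$ in $Q_n$, which is achievable whenever $n\geq 5$. Next, for each $v_i\in S$ I would pick two edges incident to $v_i$ (say the ones in the first two coordinate directions) and color them red; every remaining edge of $Q_n$ becomes blue.

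With pairwise distances at least $3$, no two red edges originating from distinct vertices of $S$ can share an endpoint, so the red subgraph is a vertex-disjoint union of $|S|$ paths of length two, each centered at some $v_i$. Any color-preserving automorphism $\varphi\in\Aut(Q_n)$ must permute these components and therefore map red-degree-$2$ vertices to red-degree-$2$ vertices; since this latter set is exactly $S$, we obtain $\varphi(S)=S$, which forces $\varphi=\id$ by the distinguishing-class property of $S$. The main obstacle is enforcing the distance-$\geq 3$ condition: if some pair of vertices in $S$ happens to lie at distance exactly $2$, a shared neighbor could acquire red-degree $2$, blurring the intrinsic identification of $S$ from the edge-coloring, and one would then need either a small modification of the distinguishing class or a careful choice of the two coordinate directions at each $v_i$ to avoid such collisions.
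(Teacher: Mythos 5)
Your proposal is correct and follows essentially the same route as the paper: take Boutin's distinguishing class $S$ of size $2\lceil\log_2 n\rceil-1$, color two edges at each vertex of $S$ red, use the fact that vertices of $S$ are pairwise at distance greater than two so that $S$ is recoverable from the red subgraph, and conclude via the distinguishing-class property. Your extra care about verifying (or enforcing) the distance-at-least-three condition is a reasonable refinement of a point the paper simply asserts from Boutin's explicit construction.
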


\begin{proof}
Let $S$ be the set of vertices that is the distinguishing class of size $2 \lceil \log_2 n \rceil -1$. For ever vertex in the set $S$ color two incident to it edges red. The remaining edges of the graph color blue. The distances of the vertices in the set $S$ are greater than two. Therefore, the red edges are adjacent only to the vertices of the set $S$. Consider $ \varphi$ to be an automorfism preserving this coloring. Clearly, it can only permute the vertices in $S$. Since $S$ is a disntinguishing class in a vertex coloring, $\varphi =\id$.
\end{proof}

\begin{proposition}
There exist Cartesian products of graphs for which $$\rho ' (G \Box H) < \max \{ \rho '(G), \rho '(H)\}.$$
\end{proposition}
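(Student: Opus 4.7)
The plan is to exhibit a concrete pair of graphs $(G,H)$ witnessing the strict inequality. I would take $G = H = C_n$ for any $n \geq 6$. Theorem \ref{Cnk} applied with $k = 2$ immediately gives $\rho'(C_n \Box C_n) = 2$, so the entire argument reduces to establishing the lower bound $\rho'(C_n) \geq 3$ for $n \geq 6$; once this is in hand, $\rho'(G \Box H) = 2 < 3 \leq \rho'(C_n) = \max\{\rho'(G), \rho'(H)\}$ as required.

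To prove $\rho'(C_n) \geq 3$, I would rule out distinguishing edge $2$-colorings of $C_n$ with exactly one or exactly two red edges by exhibiting, in each case, a non-trivial reflection of $C_n$ that preserves the coloring. For a single red edge $e$, the reflection whose axis is the perpendicular bisector of $e$ (passing through the midpoint of $e$ and the antipodal vertex or edge midpoint of $C_n$) fixes $e$ and hence the coloring. For two red edges $e_i, e_j$ with arcs of lengths $d$ and $n-d$ between them, the axis passing through the midpoints of both arcs is a genuine reflection axis of $C_n$, and the corresponding automorphism either swaps $e_i$ with $e_j$ or fixes both; in either case the coloring is preserved. Hence no edge $2$-coloring with at most two red edges can be distinguishing, so $\rho'(C_n) \geq 3$.

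The main (and essentially only) technical point is the brief parity case analysis verifying that the axes described in the previous step are indeed elements of the dihedral automorphism group of $C_n$. This is routine: the reflection axes of $C_n$ correspond exactly to pairs of antipodal positions on the cycle (vertex-edgemidpoint pairs when $n$ is odd; vertex-vertex or edgemidpoint-edgemidpoint pairs when $n$ is even), and the midpoint of any arc is always such a position, which covers every configuration that can arise.
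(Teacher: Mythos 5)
Your proposal is correct and takes essentially the same route as the paper, which exhibits $C_m \Box C_n$ for non-isomorphic cycles with $m\neq n$ and $m,n\geq 6$ (so that $\rho'(C_m\Box C_n)=2$ while $\rho'(C_m)=\rho'(C_n)=3$); you simply substitute the witness $C_n^2$ and invoke Theorem \ref{Cnk} with $k=2$ instead of the distinct-cycles theorem. A small bonus of your write-up is that you actually prove the lower bound $\rho'(C_n)\geq 3$ via the reflection argument, whereas the paper asserts $\rho'(C_n)=3$ without justification.
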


The example below justifies this fact.

\begin{example}
Consider the Cartesian product of two non-isomorphic cycles on at least six vertices. Then $\rho ' (C_m \Box C_n) = 2$. However, $\rho ' (C_m) = \rho ' (C_n) =3$.
\end{example}

\begin{theorem}
Let $K_n^k$ be a Cartesian power of a complete graph. If $k\leq n+1$, then $ \rho' (K_n^k)\leq k\rho' (K_n)$.
\end{theorem}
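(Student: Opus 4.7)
The plan is to place a red copy of a fixed asymmetric spanning subgraph $H$ of $K_n$ (with $|E(H)| = \rho'(K_n)$, provided by Theorem \ref{Kn}) into one $G_i$-layer for each $i \in \{1, \ldots, k\}$. After labelling $V(K_n) = \{1, \ldots, n\}$, we choose a position $v_i \in V(K_n^k)$ for each $i$, let $L_i$ be the $G_i$-layer through $v_i$ (so that only the $i$-th coordinate varies in $L_i$), and colour red the edges of $L_i$ that correspond to $E(H)$ under the identification $L_i \simeq K_n$ via the $i$-th coordinate. This produces a $2$-edge-coloring with exactly $k\rho'(K_n)$ red edges.

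To verify that this coloring is distinguishing, we apply Theorem \ref{autG}. Any colour-preserving automorphism $\varphi$ has the form $\varphi(x_1,\ldots,x_k) = (\psi_1(x_{\pi(1)}), \ldots, \psi_k(x_{\pi(k)}))$ for some $\pi \in S_k$ and $\psi_i \in \Aut(K_n)$. Since the red edges in the $G_i$-direction lie exactly in $L_i$, $\varphi$ must send $L_i$ to $L_{\pi^{-1}(i)}$; the induced permutation of $\{1,\ldots,n\}$, which is precisely $\psi_{\pi^{-1}(i)}$, preserves $E(H)$ and is therefore the identity by asymmetry of $H$, so every $\psi_j$ equals $\id$. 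A direct comparison of coordinates in $\varphi(L_i) = L_{\pi^{-1}(i)}$ then yields the constraints $v_i^{(\pi(\ell))} = v_{\pi^{-1}(i)}^{(\ell)}$ for all $\ell \neq \pi^{-1}(i)$, which say that the off-diagonal $k \times k$ matrix $M$ defined by $M_{ij} = v_i^{(j)}$ is preserved under the simultaneous row-and-column action of $\pi$.

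It remains to choose positions $v_i$ so that only $\pi = \id$ preserves $M$. For $k \leq n$, the prescription $M_{ij} = i$ makes each row a distinct constant in $\{1, \ldots, n\}$, so any $\pi$ preserving $M$ must fix every row and hence equals $\id$. For the boundary case $k = n+1$ we instead take $M_{ij} = i$ for $i \leq n$ and $M_{n+1,j} = j$; a short case analysis shows that $\pi(n+1) \neq n+1$ leads to a contradiction (if $\pi(n+1) = m \leq n$, then $j = M_{n+1,j} = M_{m,\pi(j)} = m$ would have to hold for all $j \in \{1,\ldots,n\}$, which is impossible), so $\pi(n+1) = n+1$ and the problem reduces to the $k = n$ case already settled.

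The main technical obstacle is precisely this combinatorial step at the boundary $k = n+1$, where the pigeonhole principle forbids $k$ pairwise distinct constant rows drawn from $\{1,\ldots,n\}$ and forces us to introduce the non-constant row $M_{n+1,j} = j$. Verifying that this single modification still leaves $M$ with trivial symmetry is exactly what makes the hypothesis $k \leq n+1$ the natural ceiling for the construction.
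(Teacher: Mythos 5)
Your proof is correct and rests on the same construction as the paper's: place one red copy of a minimum asymmetric spanning subgraph $H$ of $K_n$ (with $\rho'(K_n)$ edges) in a single $G_i$-layer for each factor $i$, choosing the base points of these layers so that no permutation of the factors can preserve the coloring. The differences are in the details. The paper positions the $i$-th red copy in the layer through $(x_{1i},x_{21},\ldots,x_{k1})$, distinguishing the layers by the first coordinate alone, and then invokes Lemma \ref{AUL} in one line; you instead take the ``diagonal'' base points $v_i=(i,\ldots,i)$ for $i\leq n$ plus the non-constant row $v_{n+1}^{(j)}=j$, and verify distinguishability directly from Theorem \ref{autG} via the row-and-column invariance of the matrix $M_{ij}=v_i^{(j)}$. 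Your route buys two things: it is self-contained (Lemma \ref{AUL} is itself only a packaging of Theorem \ref{autG}, so nothing is lost), and it treats the boundary case $k=n+1$ explicitly and cleanly --- the paper's phrasing ``choose the $i$-th vertex of the $i$-th factor'' does not literally parse when $i=n+1$, and works only because the first coordinate of the base point of the $G_1$-layer is immaterial; your case analysis for $\pi(n+1)$ makes the role of the hypothesis $k\leq n+1$ transparent. The paper's version is shorter and reuses its general lemma, which is the main thing its approach buys. One point worth making explicit in your write-up: the identification of the ``induced permutation'' on $L_i\to L_{\pi^{-1}(i)}$ with $\psi_{\pi^{-1}(i)}$ should note that this permutation maps one embedded copy of $E(H)$ onto another at the same vertex positions, hence is an automorphism of the asymmetric spanning subgraph $(V(K_n),E(H))$ and therefore the identity; this is exactly why $H$ must be taken asymmetric as a graph on all $n$ vertices, not merely on its non-isolated vertices.
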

\begin{proof}
	We define a distinguishing coloring of the graph $K_n^k$ with exactly $k\rho' (K_n)$ red edges. Let us fix an ordering of the vertices of $K_n$. In each factor we break all of its automorphisms by coloring $\rho' (K_n)$ edges red. For each $i=1,\ldots, k$ we choose the $i^{\textnormal{th}}$ vertex of the $i^{\textnormal{th}}$ factor $i=1,\ldots, n$ and color $\rho' (K_n)$ edges of the layer through $(x_{1i} x_{21} x_{31} \ldots x_{k1})$ red. By Lemma \ref{AUL}, we get a distinguishing coloring because all reduced-factor colorings are distinguishing and they are pairwise non-equivalent since colored edges are coming from different layers of the first factor.
\end{proof}

\pagebreak

\section{Appendix}
\begin{center}
\textbf{	Table of $\rho'(K_n)$}
\end{center}

\begin{table}[h]
	\centering\small
	\begin {tabular}{|c||c|||c||c|} \hline
	$n$&$\rho'(K_n)$&$n$&$\rho'(K_n)$\\ \hline\hline
	$6\leq n\leq 7$&6&$292\leq n\leq 303$ &$n-29$\\ \hline
	$8\leq n\leq 15$ &$n-2$&$304\leq n\leq 315$ &$n-30$ \\  \hline
	$16\leq n\leq 24$&$n-3$&$316\leq n\leq 327$ &$n-31$\\  \hline
	$25\leq n\leq 33$&$n-4$&$328\leq n\leq 339$ &$n-32$\\  \hline
	$34\leq n\leq 42$&$n-5$&$340\leq n\leq 351$ &$n-33$ \\ \hline
	$43\leq n\leq 52$&$n-6$&$352\leq n\leq 363$ &$n-34$ \\  \hline
	$53\leq n\leq 62$&$n-7$&$364\leq n\leq 377$ &$n-35$ \\ \hline 
	$63\leq n\leq 72$&$n-8$&$378\leq n\leq 389$ &$n-36$\\ \hline 
	$73\leq n\leq 82$&$n-9$&$390\leq n\leq 401$ &$n-37$\\\hline
	$83\leq n\leq 92$&$n-10$&$402\leq n\leq 413$ &$n-38$\\\hline
	$93\leq n\leq 102$&$n-11$&$414\leq n\leq 425$ &$n-39$\\\hline
	$103\leq n\leq 113$&$n-12$&$426\leq n\leq 437$ &$n-40$\\\hline
	$114\leq n\leq 124$&$n-13$&$438\leq n\leq 449$ &$n-41$\\\hline
	$125\leq n\leq 135$&$n-14$&$450\leq n\leq 461$ &$n-42$\\\hline
	$136\leq n\leq 146$&$n-15$&$462\leq n\leq 473$ &$n-43$\\\hline
	$147\leq n\leq 157$&$n-16$&$474\leq n\leq 485$ &$n-44$\\\hline
	$158\leq n\leq 168$&$n-17$&$486\leq n\leq 497$ &$n-45$\\\hline
	$169\leq n\leq 179$&$n-18$&$498\leq n\leq 509$ &$n-46$\\\hline
	$180\leq n\leq 190$&$n-19$&$510\leq n\leq 521$ &$n-47$\\\hline
	$191\leq n\leq 201$&$n-20$&$522\leq n\leq 533$ &$n-48$\\\hline
	$202\leq n\leq 212$&$n-21$&$534\leq n\leq 545$ &$n-49$\\\hline
	$213\leq n\leq 223$&$n-22$&$546\leq n\leq 557$ &$n-50$\\\hline
	$224\leq n\leq 234$&$n-23$&$558\leq n\leq 569$ &$n-51$\\\hline
	$235\leq n\leq 245$&$n-24$&$570\leq n\leq 581$ &$n-52$\\\hline
	$246\leq n\leq 256$&$n-25$&$582\leq n\leq 593$ &$n-53$\\\hline
	$257\leq n\leq 267$&$n-26$&$594\leq n\leq 605$ &$n-54$\\\hline
	$268\leq n\leq 279$&$n-27$&$606\leq n\leq 617$ &$n-55$\\\hline
	$280\leq n\leq 291$&$n-28$&$618\leq n\leq 630$ &$n-56$\\\hline
	\end {tabular}
	
	\medskip
	\caption{Some different values of $\rho'(K_n)$, calculated using Procedure \ref{pro} and a list of asymmetric trees on at most 12 vertices (e.g., see \cite{H}).	\label{tab_for_rho'_of_K_n}}
\end{table}

\end{document}